\newcommand{\bnum}{\begin{enumerate}}
\newcommand{\enum}{\end{enumerate}}
\newcommand{\equa}[3]
\theoremstyle{plain}
\newtheorem{theorem}{Theorem}[section]
\newtheorem{lemma}[theorem]{Lemma}
\newtheorem{proposition}[theorem]{Proposition}
\newtheorem{corollary}[theorem]{Corollary}
\theoremstyle{definition}
\newtheorem{Def}[theorem]{Definition}
\newtheorem{Example}[theorem]{Example}
\numberwithin{equation}{section}
\DeclareMathOperator*{\dprime}{\prime \prime}
\title{Tiered tree, Parking function and Postnikov-Shapiro algebra }
\author{Biswadeep Bagchi \\ \href{mailto:biswadeepbagchi430@gmail.com}{biswadeepbagchi430@gmail.com} 
   \and Srinibas Swain\\ \href{mailto:srinibas@iiitg.ac.in}{srinibas@iiitg.ac.in} }
\date{}
\begin{document}
\maketitle
\begin{abstract}
 Tiered trees were introduced as a combinatorial object for counting absolutely indecomposable representation of certain quivers and torus orbit of certain homogeneous variety. In this paper, we define a bijection between the set of parallelogram polyominoes  and graphical parking functions. Moreover, we defined the space $\mathcal{S}_{G}$ for complete tiered graphs and described tiered graphs in terms of Whitney's operations.   
\end{abstract}

\section{Introduction}
{\em Tiered trees} were defined in \cite{DGGE19} as trees on vertices labelled $\{1, \dots, n\}$ with an integer valued  tiering function {\bf t} on the set of vertices such that if two vertices $a$ and $b$ with $a < b$ are adjacent then ${\bf t}(a) < {\bf t}(b)$. Tiered trees are generalisation of {\em intransitive trees} {\cite{Post97}} with two tiers. These trees related to the spanning trees of some inversion graphs associated to a permutation. Tiered trees are connected to other combinatorial objects such as recurrent configuration and level statistic on {\em abelian sandpile model} {\cite{DSSS19}}. The origin of tiered trees can be derived from geometric counting problems \cite{GLV}:
\begin{itemize}
    \item Counting absolutely irreducible representation of the supernova quivers (quivers with long legs attached to vertices), and

    \item Counting certain torus orbit of partial flag variety of type A over finite field with trivial stabilizer.
\end{itemize}
Tiered trees can be described in terms of symmetric function and theta operators \cite{DIBRW}. These operators enumerates tiered trees in terms of inversion numbers. \\ 
Parking functions are sequence in $\mathbb N^{n}$ which arose in the study of hashing function \cite{KW}. The name of such sequence came in terms of parking for cars. Assume that $n $ parking spots labeled 0 through $n-1$ are arranged in order on a street. Drivers drive cars $A_{1}, \dots A_{n}$. Car $A_{k}$ prefers spot $b_{k}$
, which means that driver will drive until they reaches
that spot and park there if it is not occupied. If  preferred spot is occupied the the driver will continue driving until they come to an unoccupied spot and park the car in this spot. If preferred spot and all the
spots after it are occupied, then the driver cannot park. A parking function of length $n$ is sequence $(b_{1}, \dots , b_{n}) \in \mathbb N^{n}$  of parking preferences so that all the cars can park. For an equivalent definition see Definition ~\ref{Pf}(Section ~\ref{result}). 

Parking functions have appeared in various area of mathematics, more specifically in combinatorics such as hyperplane arrangements \cite{St} and diagonal harmonics \cite{HAG}. Parking functions are related to labelled trees. The number of parking functions of length $n$ is $(n+1)^{n-1}$. By Cayley's formula this is exactly the number of labelled trees on $n+1$ vertices. Moreover, there is an explicit bijection between parking functions and labelled trees. Let $b=(b_{1}, \dots, b_{n})$ be a parking function, then the \textit{sum} of $b$ is defined by $s(b)= \sum_{i \geqslant 1} b_{i} $. The \textit{reversed sum} of $b $ is defined by $rs(b)= {n\choose 2} -s(b)$. The \textit{reversed sum enumerator} is given by the expression $\sum_{b} q^{rs(b)}$, where $b$ ranges over all parking functions of length $n$. Kreweras \cite{K80} proved that the inversion number of labelled trees on $(n+1)$ vertices and  reversed sum of parking function of length $n$ are equal. \textit{Graphical parking functions} were first introduced by Postnikov and Shapiro \cite{PS}. These parking functions are connected to abelian sandpile models of graphs where it is called \textit{superstable configuration} (see \cite{PS}). Graphical parking function or $G$-parking functions depend on a choice of a graph $G$. For a set of $G$-parking functions, we have similar notions of sum $S(b)$ and reversed sum $rs(b)$, where $b$ is a $G$ -parking function. Postnikov and Shapiro proved that for a directed graph $G$ the number of $G$-parking function is equal to the number of spannig trees of $G$ by using Matrix-Tree theorem. Further, they defined the monomial and power ideals on labelled graphs. In \cite{P16} the authors extended the definition of $G$-parking function on any simple graphs and provided an bijection between $G $-parking function and a spannig trees of graphs by 
\[  \sum_{b} q^{rs(b)}= \sum_{T} q^{\kappa(G,T)}  , \] where $b$ ranges over all $G$-parking functions and $T$ ranges over all spannig tress of $G$. The number $\kappa(G,T)$ is a generalisation of the inversion number of spannig trees. When $G= K_{n+1}$, the complete graph on $(n+1)$ vertices the number of $G$-parking functions is same as number of classical parking functions of length $n$.\\
In the same paper \cite{PS}, Postnikov and Shapiro defined some algebras related to graphs. The algebras $\mathcal{B}_{G}$ and $\mathcal{C}_{G}$ which are called power algebra and counting spannig tree algebra of a graph\cite{PS} (see Section ~\ref{PS}). They showed that these algebras are isomorphic and it counts the number of spannig trees of a graph $G$. Moreover, the $k$-graded components of these algebras are equal to the the number of spannig trees with external activity $e(G)-n-k$ (see \cite{PS}).

In this paper, we define the graphical parking configuration on tiered trees and provided a bijection between set of parking configurations and labelled parallelogram polyominoes. The proof of the bijection is based on the algorithm defined over the unstable vertices (see Section~\ref{result}) of a graph. Later, we have described Postnikov-Shapiro algebra for tiered trees and forests respectively. We extended the definition of a \textit{dual graph} of a tiered graph with $m$ tiers and $n$ vertices, where $m \leqslant n$ and proved that tired graph and its dual are isomorphic.    

This paper is organised in the following way: In Section~\ref{def}, we introduce the combinatorial definitions surrounding graphs and tiered trees. In Section ~\ref{result}, we state and prove our main result of the paper regarding parallelogram polyomino and G-parking function. In Section ~\ref{PS} we recall the definitions of the algebras $\mathcal{B}_{G}$ and $\mathcal{C}_{G}$ and defined the space $\mathcal{S}_{G}$ for complete tiered graphs. We extended the notion of dual graphs as well as described tiered graphs in terms of Whitney's operations.

\section{Combinatorial definitions}\label{def}

\begin{Def}
In this paper, we assume $G$ to be a finite simple undirected  graph with the vertex set $V$ and edge set $E$. A rooted graph $G$ is a graph $(V,E)$ with a distinguished vertex $ r \in V$ designated as root. A tree is a connected graph with no cycles. A tree $T$ is a spanning tree of a graph $G$ if $T$ is a spanning subgraph of $G$. We denote $ST(G)$ be the set of all spannig trees of $G$. 
\end{Def}
\begin{Def}
    Let $T$ be a rooted tree with root  $ r \in $ V. We define the distance of a vertex $i$ from $r$, denoted by $dist(i,r)$ or simply $dist(i)$, is the shortest path between $i$ and $r$. The parent of a vertex $i\neq r$ is the unique neighbour $j$ of $i$ such that $dist(j) < dist(i)$ and we call $i$ is a child of $j$. We say $j$ is a descendant of $i$ if there exists a path $ i \xrightarrow{}i_{0} \xrightarrow[]{}   i_{1} \xrightarrow{} \dots \xrightarrow{ } i_{k}= j $. Note that,  we are not considering that $i$ is  descendant to itself. 
\end{Def}
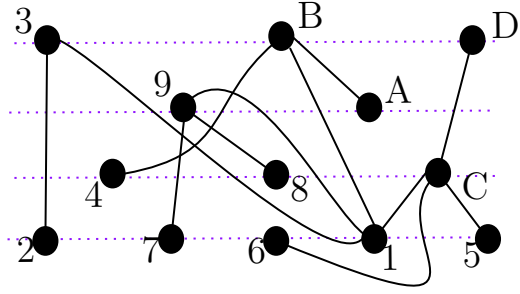
\begin{figure}
\centering
\tikzset{every picture/.style={line width=0.005pt}} 
\begin{tikzpicture}[x=0.45pt,y=0.45pt,yscale=-1,xscale=1]
\draw [color={rgb, 255:red, 144; green, 19; blue, 254 }  ,draw opacity=1 ][fill={rgb, 255:red, 144; green, 19; blue, 254 }  ,fill opacity=1 ][line width=0.75]  [dash pattern={on 0.84pt off 2.51pt}]  (113.79,44.66) -- (261.43,44.06) -- (522,43) ;
\draw [color={rgb, 255:red, 144; green, 19; blue, 254 }  ,draw opacity=1 ][fill={rgb, 255:red, 144; green, 19; blue, 254 }  ,fill opacity=1 ][line width=0.75]  [dash pattern={on 0.84pt off 2.51pt}]  (109.45,102.98) -- (257.09,102.38) -- (517.66,101.31) ;
\draw [color={rgb, 255:red, 144; green, 19; blue, 254 }  ,draw opacity=1 ][fill={rgb, 255:red, 144; green, 19; blue, 254 }  ,fill opacity=1 ][line width=0.75]  [dash pattern={on 0.84pt off 2.51pt}]  (113.79,157.97) -- (261.43,157.36) -- (522,156.3) ;
\draw [color={rgb, 255:red, 144; green, 19; blue, 254 }  ,draw opacity=1 ][fill={rgb, 255:red, 144; green, 19; blue, 254 }  ,fill opacity=1 ][line width=0.75]  [dash pattern={on 0.84pt off 2.51pt}]  (108,209.62) -- (255.64,209.02) -- (516.21,207.95) ;
\draw  [fill={rgb, 255:red, 0; green, 0; blue, 0 }  ,fill opacity=1 ][line width=0.75]  (131.16,42) .. controls (131.16,35.55) and (135.7,30.33) .. (141.29,30.33) .. controls (146.89,30.33) and (151.43,35.55) .. (151.43,42) .. controls (151.43,48.44) and (146.89,53.66) .. (141.29,53.66) .. controls (135.7,53.66) and (131.16,48.44) .. (131.16,42) -- cycle ;
\draw  [fill={rgb, 255:red, 0; green, 0; blue, 0 }  ,fill opacity=1 ][line width=0.75]  (328.03,38.66) .. controls (328.03,32.22) and (332.56,27) .. (338.16,27) .. controls (343.76,27) and (348.29,32.22) .. (348.29,38.66) .. controls (348.29,45.11) and (343.76,50.33) .. (338.16,50.33) .. controls (332.56,50.33) and (328.03,45.11) .. (328.03,38.66) -- cycle ;
\draw  [fill={rgb, 255:red, 0; green, 0; blue, 0 }  ,fill opacity=1 ][line width=0.75]  (488.71,42) .. controls (488.71,35.55) and (493.24,30.33) .. (498.84,30.33) .. controls (504.44,30.33) and (508.97,35.55) .. (508.97,42) .. controls (508.97,48.44) and (504.44,53.66) .. (498.84,53.66) .. controls (493.24,53.66) and (488.71,48.44) .. (488.71,42) -- cycle ;
\draw  [fill={rgb, 255:red, 0; green, 0; blue, 0 }  ,fill opacity=1 ][line width=0.75]  (245.52,98.65) .. controls (245.52,92.21) and (250.05,86.98) .. (255.65,86.98) .. controls (261.25,86.98) and (265.78,92.21) .. (265.78,98.65) .. controls (265.78,105.09) and (261.25,110.31) .. (255.65,110.31) .. controls (250.05,110.31) and (245.52,105.09) .. (245.52,98.65) -- cycle ;
\draw  [fill={rgb, 255:red, 0; green, 0; blue, 0 }  ,fill opacity=1 ][line width=0.75]  (401.85,98.65) .. controls (401.85,92.21) and (406.39,86.98) .. (411.99,86.98) .. controls (417.58,86.98) and (422.12,92.21) .. (422.12,98.65) .. controls (422.12,105.09) and (417.58,110.31) .. (411.99,110.31) .. controls (406.39,110.31) and (401.85,105.09) .. (401.85,98.65) -- cycle ;
\draw  [fill={rgb, 255:red, 0; green, 0; blue, 0 }  ,fill opacity=1 ][line width=0.75]  (459.76,153.63) .. controls (459.76,147.19) and (464.29,141.97) .. (469.89,141.97) .. controls (475.48,141.97) and (480.02,147.19) .. (480.02,153.63) .. controls (480.02,160.08) and (475.48,165.3) .. (469.89,165.3) .. controls (464.29,165.3) and (459.76,160.08) .. (459.76,153.63) -- cycle ;
\draw  [fill={rgb, 255:red, 0; green, 0; blue, 0 }  ,fill opacity=1 ][line width=0.75]  (323.69,155.3) .. controls (323.69,148.86) and (328.22,143.64) .. (333.82,143.64) .. controls (339.41,143.64) and (343.95,148.86) .. (343.95,155.3) .. controls (343.95,161.74) and (339.41,166.96) .. (333.82,166.96) .. controls (328.22,166.96) and (323.69,161.74) .. (323.69,155.3) -- cycle ;
\draw  [fill={rgb, 255:red, 0; green, 0; blue, 0 }  ,fill opacity=1 ][line width=0.75]  (186.16,154.33) .. controls (186.16,147.89) and (190.7,142.67) .. (196.3,142.67) .. controls (201.89,142.67) and (206.43,147.89) .. (206.43,154.33) .. controls (206.43,160.77) and (201.89,166) .. (196.3,166) .. controls (190.7,166) and (186.16,160.77) .. (186.16,154.33) -- cycle ;
\draw  [fill={rgb, 255:red, 0; green, 0; blue, 0 }  ,fill opacity=1 ][line width=0.75]  (501.73,209.32) .. controls (501.73,202.88) and (506.27,197.65) .. (511.86,197.65) .. controls (517.46,197.65) and (522,202.88) .. (522,209.32) .. controls (522,215.76) and (517.46,220.98) .. (511.86,220.98) .. controls (506.27,220.98) and (501.73,215.76) .. (501.73,209.32) -- cycle ;
\draw  [fill={rgb, 255:red, 0; green, 0; blue, 0 }  ,fill opacity=1 ][line width=0.75]  (406.19,209.32) .. controls (406.19,202.88) and (410.73,197.65) .. (416.33,197.65) .. controls (421.92,197.65) and (426.46,202.88) .. (426.46,209.32) .. controls (426.46,215.76) and (421.92,220.98) .. (416.33,220.98) .. controls (410.73,220.98) and (406.19,215.76) .. (406.19,209.32) -- cycle ;
\draw  [fill={rgb, 255:red, 0; green, 0; blue, 0 }  ,fill opacity=1 ][line width=0.75]  (323.68,210.98) .. controls (323.68,204.54) and (328.22,199.32) .. (333.81,199.32) .. controls (339.41,199.32) and (343.95,204.54) .. (343.95,210.98) .. controls (343.95,217.43) and (339.41,222.65) .. (333.81,222.65) .. controls (328.22,222.65) and (323.68,217.43) .. (323.68,210.98) -- cycle ;
\draw  [fill={rgb, 255:red, 0; green, 0; blue, 0 }  ,fill opacity=1 ][line width=0.75]  (235.38,209.32) .. controls (235.38,202.88) and (239.92,197.65) .. (245.51,197.65) .. controls (251.11,197.65) and (255.65,202.88) .. (255.65,209.32) .. controls (255.65,215.76) and (251.11,220.98) .. (245.51,220.98) .. controls (239.92,220.98) and (235.38,215.76) .. (235.38,209.32) -- cycle ;
\draw  [fill={rgb, 255:red, 0; green, 0; blue, 0 }  ,fill opacity=1 ][line width=0.75]  (129.71,210.98) .. controls (129.71,204.54) and (134.25,199.32) .. (139.84,199.32) .. controls (145.44,199.32) and (149.98,204.54) .. (149.98,210.98) .. controls (149.98,217.43) and (145.44,222.65) .. (139.84,222.65) .. controls (134.25,222.65) and (129.71,217.43) .. (129.71,210.98) -- cycle ;
\draw [line width=0.75]    (141.29,42) -- (139.84,210.98) ;
\draw [line width=0.75]    (141.29,42) .. controls (149.98,16.34) and (370.01,244.61) .. (406.19,209.32) ;
\draw [line width=0.75]    (345.4,48.66) -- (416.33,197.65) ;
\draw [line width=0.75]    (346.85,38.66) -- (411.99,98.65) ;
\draw [line width=0.75]    (206.43,154.33) .. controls (297.63,137.97) and (274.47,94.35) .. (332.37,44.36) ;
\draw [line width=0.75]    (255.65,98.65) -- (333.82,155.3) ;
\draw [line width=0.75]    (259.99,92.98) .. controls (317.9,43) and (387.38,207.95) .. (416.33,209.32) ;
\draw [line width=0.75]    (257.09,102.38) -- (245.51,209.32) ;
\draw [line width=0.75]    (469.89,153.63) -- (511.86,209.32) ;
\draw [line width=0.75]    (333.81,210.98) .. controls (555.29,302.93) and (411.99,203.62) .. (469.89,153.63) ;
\draw [line width=0.75]    (498.84,42) -- (469.89,153.63) ;
\draw [line width=0.75]    (459.76,153.63) -- (416.33,209.32) ;

\draw (113.88,206.07) node [anchor=north west][inner sep=0.75pt]  [font=\Large,rotate=-359.41] [align=left] {2};
\draw (307.88,207.07) node [anchor=north west][inner sep=0.75pt]  [font=\Large,rotate=-359.41] [align=left] {6};
\draw (218.88,206.07) node [anchor=north west][inner sep=0.75pt]  [font=\Large,rotate=-359.41] [align=left] {7};
\draw (419.88,213.07) node [anchor=north west][inner sep=0.75pt]  [font=\Large,rotate=-359.41] [align=left] {1};
\draw (488.86,210.98) node [anchor=north west][inner sep=0.75pt]  [font=\Large,rotate=-359.41] [align=left] {5};
\draw (487.88,152.07) node [anchor=north west][inner sep=0.75pt]  [font=\Large,rotate=-359.41] [align=left] {C};
\draw (343.95,155.3) node [anchor=north west][inner sep=0.75pt]  [font=\Large,rotate=-359.41] [align=left] {8};
\draw (170.88,160.07) node [anchor=north west][inner sep=0.75pt]  [font=\Large,rotate=-359.41] [align=left] {4};
\draw (422.88,71.07) node [anchor=north west][inner sep=0.75pt]  [font=\Large,rotate=-359.41] [align=left] {A};
\draw (228.88,67.07) node [anchor=north west][inner sep=0.75pt]  [font=\Large,rotate=-359.41] [align=left] {9};
\draw (248.88,87.07) node [anchor=north west][inner sep=0.75pt]  [font=\Large,rotate=-359.41] [align=left] {9};
\draw (512.88,17.07) node [anchor=north west][inner sep=0.75pt]  [font=\Large,rotate=-359.41] [align=left] {D};
\draw (349.88,8.07) node [anchor=north west][inner sep=0.75pt]  [font=\Large,rotate=-359.41] [align=left] {B};
\draw (111.88,14.07) node [anchor=north west][inner sep=0.75pt]  [font=\Large,rotate=-359.41] [align=left] {3};
\end{tikzpicture}
\caption{A tier tree with tier $(5,2,2,3)$.}\label{fig1}
\end{figure}
\begin{Def}
    A labelled {\em tiered graph } is a graph $(G,\textbf{t})$ where $ G =(V,E) $ with each vertex is labelled with $\{1,2, \dots, n\}$ with a surjective function ${\bf t:} V \mapsto [m]$ where $m \leqslant n$ subject to the following condition:

    \begin{itemize}
        \item For two vertices i and j, if $ (i,j) \in $E then $ {\bf t}(i) \neq {\bf t}(j)$. 
        \item if $(i,j) \in E$ and $i < j$  then $ {\bf t}(i) < {\bf t}(j)$ .
    \end{itemize}
\end{Def}
\begin{figure}
\centering
 \tikzset{every picture/.style={line width=0.75pt}} 

\begin{tikzpicture}[x=0.45pt,y=0.45pt,yscale=-1,xscale=1]

\draw [color={rgb, 255:red, 144; green, 19; blue, 254 }  ,draw opacity=1 ][fill={rgb, 255:red, 144; green, 19; blue, 254 }  ,fill opacity=1 ][line width=0.75]  [dash pattern={on 0.84pt off 2.51pt}]  (113.79,44.66) -- (261.43,44.06) -- (522,43) ;
\draw [color={rgb, 255:red, 144; green, 19; blue, 254 }  ,draw opacity=1 ][fill={rgb, 255:red, 144; green, 19; blue, 254 }  ,fill opacity=1 ][line width=0.75]  [dash pattern={on 0.84pt off 2.51pt}]  (113.79,157.97) -- (261.43,157.36) -- (522,156.3) ;
\draw  [fill={rgb, 255:red, 0; green, 0; blue, 0 }  ,fill opacity=1 ][line width=0.75]  (328.03,38.66) .. controls (328.03,32.22) and (332.56,27) .. (338.16,27) .. controls (343.76,27) and (348.29,32.22) .. (348.29,38.66) .. controls (348.29,45.11) and (343.76,50.33) .. (338.16,50.33) .. controls (332.56,50.33) and (328.03,45.11) .. (328.03,38.66) -- cycle ;
\draw  [fill={rgb, 255:red, 0; green, 0; blue, 0 }  ,fill opacity=1 ][line width=0.75]  (265.52,100.65) .. controls (265.52,94.21) and (270.05,88.98) .. (275.65,88.98) .. controls (281.25,88.98) and (285.78,94.21) .. (285.78,100.65) .. controls (285.78,107.09) and (281.25,112.31) .. (275.65,112.31) .. controls (270.05,112.31) and (265.52,107.09) .. (265.52,100.65) -- cycle ;
\draw  [fill={rgb, 255:red, 0; green, 0; blue, 0 }  ,fill opacity=1 ][line width=0.75]  (401.85,98.65) .. controls (401.85,92.21) and (406.39,86.98) .. (411.99,86.98) .. controls (417.58,86.98) and (422.12,92.21) .. (422.12,98.65) .. controls (422.12,105.09) and (417.58,110.31) .. (411.99,110.31) .. controls (406.39,110.31) and (401.85,105.09) .. (401.85,98.65) -- cycle ;
\draw  [fill={rgb, 255:red, 0; green, 0; blue, 0 }  ,fill opacity=1 ][line width=0.75]  (403.76,155.63) .. controls (403.76,149.19) and (408.29,143.97) .. (413.89,143.97) .. controls (419.48,143.97) and (424.02,149.19) .. (424.02,155.63) .. controls (424.02,162.08) and (419.48,167.3) .. (413.89,167.3) .. controls (408.29,167.3) and (403.76,162.08) .. (403.76,155.63) -- cycle ;
\draw  [fill={rgb, 255:red, 0; green, 0; blue, 0 }  ,fill opacity=1 ][line width=0.75]  (269.16,156.33) .. controls (269.16,149.89) and (273.7,144.67) .. (279.3,144.67) .. controls (284.89,144.67) and (289.43,149.89) .. (289.43,156.33) .. controls (289.43,162.77) and (284.89,168) .. (279.3,168) .. controls (273.7,168) and (269.16,162.77) .. (269.16,156.33) -- cycle ;
\draw  [fill={rgb, 255:red, 0; green, 0; blue, 0 }  ,fill opacity=1 ][line width=0.75]  (331.19,100.32) .. controls (331.19,93.88) and (335.73,88.65) .. (341.33,88.65) .. controls (346.92,88.65) and (351.46,93.88) .. (351.46,100.32) .. controls (351.46,106.76) and (346.92,111.98) .. (341.33,111.98) .. controls (335.73,111.98) and (331.19,106.76) .. (331.19,100.32) -- cycle ;
\draw [color={rgb, 255:red, 144; green, 19; blue, 254 }  ,draw opacity=1 ][fill={rgb, 255:red, 144; green, 19; blue, 254 }  ,fill opacity=1 ][line width=0.75]  [dash pattern={on 0.84pt off 2.51pt}]  (123.93,99.78) -- (271.57,99.18) -- (532.14,98.12) ;
\draw    (338.16,38.66) -- (280,96.6) ;
\draw    (338.16,38.66) -- (414,91.6) ;
\draw    (341.33,100.32) -- (408,150.6) ;
\draw  [dash pattern={on 4.5pt off 4.5pt}]  (341,46) -- (414,149.6) ;
\draw  [dash pattern={on 4.5pt off 4.5pt}]  (284,107) -- (410,162.6) ;
\draw  [dash pattern={on 4.5pt off 4.5pt}]  (413,103.6) -- (414,149.6) ;
\draw  [dash pattern={on 4.5pt off 4.5pt}]  (338.16,38.66) -- (283,148.6) ;
\draw    (276,107) -- (276,148.6) ;
\draw  [dash pattern={on 4.5pt off 4.5pt}]  (285,160) -- (405,102.6) ;
\draw    (338.16,38.66) -- (341.33,88.65) ;
\draw  [dash pattern={on 4.5pt off 4.5pt}]  (279.3,156.33) -- (338,105.6) ;

\draw (309,14) node [anchor=north west][inner sep=0.75pt]   [align=left] {{\Large 6}};
\draw (389,163) node [anchor=north west][inner sep=0.75pt]  [font=\Large] [align=left] {2};
\draw (250,163) node [anchor=north west][inner sep=0.75pt]   [align=left] {{\Large 1}};
\draw (353,85) node [anchor=north west][inner sep=0.75pt]   [align=left] {{\Large 4}};
\draw (424,81) node [anchor=north west][inner sep=0.75pt]   [align=left] {{\Large 5}};
\draw (246,85) node [anchor=north west][inner sep=0.75pt]   [align=left] {{\Large 3}};
\end{tikzpicture}
\caption{A tiered tree (black edges) and a complete tiered graph (black and dotted edges).}\label{fig2}
\end{figure}
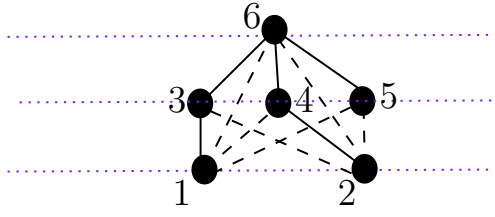

For the rest of this paper, we write tiered graph as $G$ instead of a tuple $(G,t)$. When tiered graph is a tree then we say that it is tiered tree with tiering function $\bf{t}$ and  we denote it by $T$. For example see Figure~\ref{fig1}. These trees  are the generalisation of Postnikov's intransitive trees {\cite{Post97}} on two tiers. We say that a tree is fully tiered if each tiers contain exactly one vertex. Tiered trees are related to the spanning trees of inversion graph associate to a permutation $\sigma$ where two vertices $i$ and $j$ are joined whenever for $i < j, \text{and} \hspace{0.5 mm}\sigma(i) > \sigma(j)$. A labelling of a tiered graph is said to be \textit{standard} if all the labels are from $\{1,2, \dots, n\}$. 

\begin{Def}
A tiered rooted tree is tiered tree with tiering function $\textbf{t}$ and a root $r$. 
\end{Def}

\begin{Def}
    Given a graph $G=(V,E)$, let $T$ be its spannig tree. For a total order $\prec$, we say that
    \begin{itemize}
        \item $e \in T$ is \textit{internally active } if it is the minimal edge according to $\prec$, in a cut of $G$ determined by the edge $e$ and $T$. 
        \item  $e \notin T$ is \textit{externally active} if it is the minimal edge according to $\prec$, in the unique cycle determined by $T \cup \{e\}$.
    \end{itemize}
\end{Def}
We denote $\bf{int}(T)$ be the number of internally active edges
and $\bf{ext}(T) $ be the number of externally active edges.   We define the Tutte polynomial of $G$ as, 
$$ T_{G}(x,y)= \sum_{T \in ST(G)} x^{ \bf int(T)} y^{\bf ext{(T)}} ,$$
 where order of edges are defined lexicographically.   
Note that from the classical result \cite{T54}, Tutte polynomial is independent of choice of ordering on the edges.  

\begin{Def}
An inversion of a tree $T$ is a pair of vertices $(i,j)$ such that $j$ is a descendant of $i$ and $i > j$. If $T$ is a spannnig tree of a rooted graph $G$ then the $\kappa$- inversion of $T$ is the pair $(i,j)$ such that $i$ is not a root and parent $p(i)$ of $i$ and $j$ are in the edge set of a rooted graph $G=(V,E)$ . The number of $\kappa$-inversion is denoted  by $inv_{\kappa}(T)$.

\end{Def}
An $\alpha$- tree is a tiered rooted tree where $\alpha = (\alpha_{1}, \dots, \alpha_{k})$ is a composition of $n$ such that $\sum_{i}\alpha_{i}=n $. As a generalization, it can be allowed to have repeated labelling on the vertices of the tiered tree (see \cite{DIBRW}).  A statistic on a class of combinatorial object, say $\mathcal{C}$ is a function from $\mathcal{C}$ to a set $\bf{S}$ (ideally taken as $\mathbb N$). A statistic on a spannig tree $T$ can be interpreted in terms of specialization of Tutte polynomial.   
\begin{Def}
For a graph G, a statistics $ \bf stat: ST(G) \mapsto \mathbb{N} $ is {\em Tutte descriptive } if
$$ T_{G}(1,q)= \sum_{T \in ST(G)} q^{\bf stat(T)}.  $$
\end{Def}
\begin{proposition}
    For any labelled tiered graph the statistic $ {\bf inv_{\kappa}}(T)$ is Tutte descriptive. 
\end{proposition}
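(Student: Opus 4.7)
The natural starting point is the specialization of the Tutte polynomial at $x=1$, which by the definition given in the paper collapses to
\[
T_{G}(1,q) \;=\; \sum_{T\in ST(G)} q^{\mathbf{ext}(T)}.
\]
Hence the proposition reduces to the equidistribution
\[
\sum_{T\in ST(G)} q^{\mathbf{inv}_{\kappa}(T)} \;=\; \sum_{T\in ST(G)} q^{\mathbf{ext}(T)},
\]
and this is exactly the identity established by Perkinson--Yang--Yu for simple rooted graphs (reference \cite{P16} in the paper), since Tutte polynomials are independent of the chosen edge ordering. The plan is therefore to make the lexicographic ordering on edges (which a standard labelling of a tiered graph provides) the underlying linear order and then invoke the general result, after verifying that no feature of the tiered structure obstructs the hypotheses.

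The more self-contained route, which I would sketch as an alternative, is a deletion--contraction argument. Both sides satisfy $f(G)=f(G\setminus e)+f(G/e)$ for any edge $e$ that is neither a loop nor a bridge, with the base cases (edgeless graphs, single edges, loops and bridges) verified by inspection. For the Tutte side this is the classical recursion. For the $\kappa$-side, one chooses $e$ to be the maximum edge in the lex order of $E$ and partitions $ST(G)=ST_{e}(G)\sqcup ST_{\bar e}(G)$ according to whether $e\in T$. Spanning trees in $ST_{\bar e}(G)$ are in natural bijection with $ST(G\setminus e)$ and spanning trees in $ST_{e}(G)$ with $ST(G/e)$; one then checks that under these bijections $\mathbf{inv}_{\kappa}$ is preserved.

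The main obstacle is precisely this last step: tracking how $\kappa$-inversions behave under contraction. Contracting an edge changes parent--child relationships, so pairs $(i,j)$ that were $\kappa$-inversions in $T$ may cease to be inversions in the contracted tree, while new descendant relations can be created. The technical heart of the argument is to pick the edge $e$ so carefully (incident to the root, maximal in lex order among such edges, chosen relative to the tiering function $\mathbf{t}$) that every $\kappa$-inversion $(i,j)$ either involves $e$ directly, in which case $e$ is externally active in $T\cup\{e\}$ and the inversion is accounted for by the contribution of $ST(G/e)$, or is unaffected by the operation. Handling the interaction of the tiering condition $\mathbf{t}(i)<\mathbf{t}(j)$ whenever $i<j$ and $(i,j)\in E$ with the parent function is what requires the most care; once this is in place, the recursion matches term by term and the induction closes.
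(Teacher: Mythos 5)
The paper offers no proof of this proposition at all---it is stated bare, implicitly resting on the results of \cite{P16} already quoted in the introduction---so your first route (specializing to $T_{G}(1,q)=\sum_{T\in ST(G)} q^{\mathbf{ext}(T)}$ and invoking the Perkinson--Yang--Yu identity, after observing that the tiering structure plays no role beyond fixing a vertex labelling and a root) is precisely the justification the paper itself relies on, and it is correct. Your alternative deletion--contraction sketch is unnecessary for this purpose and, as you yourself concede, leaves its crucial step --- that $\mathbf{inv}_{\kappa}$ is preserved under the bijections with $ST(G\setminus e)$ and $ST(G/e)$ --- unverified, so it should not be presented as a completed argument.
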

Note that the number of inversion of a $\alpha$-tree is the same as its standardisation \cite{DIBRW}.

\begin{Def}
    For a tiered tree $T$, we say that $(i,j)$ is a \textit{compatible pair} if either $\textbf{t} (i) < \textbf{t}(j)$ and $i < j$ or $\textbf{t}(i) > \textbf{t}(j)$ and $i >j$
\end{Def}
For example, the tiered tree in Figure  ~\ref{fig2} the compatibel pairs (the dotted edges) are 
$$(1,6),(1,4),(1,5), (2,3),(2,5),(2,6).$$ 
Let $\textbf{TT}(\alpha)$ be the set of all $\alpha$ trees, we define its \textit{compatibility graph} $G_{t}$ by joining all compatible pairs. The $\alpha$-trees are the spannig trees of its compatibility graph. Note that when the labels of the tiered trees are standard labelled then compatibility graphs are the complete tiered graphs.  
In \cite{DIBRW} authors defined the generalisation of the inversion number of rooted trees to labelled tiered rooted trees. We recall the inversion number for tiered rooted trees is the pair of non rooted vertices $(i,j)$ such that
\begin{enumerate}
    \item [(1)] $j$ is a descendant of $i$,
    \item[(2)] $j $ compatible with the parent of $i$,
    \item[(3)] the pairs $(i,j)$ where $i > j$ or $i = j $
\end{enumerate}
When rooted tiered trees have standard labels then number of generalised inversion is just the $\kappa$-inversion of the spannig tree of a tiered graph. 
\section{Parking configuration and trees}\label{result}
In this section we state and prove the result regarding parallelogram  polynominoes and parking configuration. 
\begin{Def}
   Let $m$ and $n$ are two positive integers. A North-East path (lattice path ) $P$ in a $m \times n$  grid, is a sequence $P=(p_{0}, p_{1}, \dots, p_{n})$ with steps in $S= \{(0,1),(1,0)\}$ from $(0,0)$ to $(m,n)$. The $(0,1)$ steps are called north step and the $(1,0)$ step as east. We denote the north steps and east steps by $N^{'}$s and $E^{'}$s respectively. In Figure~\ref{fig3} the lattice path is encoded as $NNNEEENNENEENNE$. Let $\textbf{P}(m,n)$ be the set of lattice paths. 
\end{Def}
 \begin{Def}
     Let $P$ be a lattice path in an $m \times n$ grid. $P$ is a \textit{labelled path} if we add labels (from 1 to $n$) to the north steps of the path $P$. We place the labels in an increasing order in the north step whenever they lie in the same column. That is, labels are strictly increasing from bottom to top.  
 \end{Def}

  \begin{Def} 
   A \textit{parallelogram polynominoes} is a pair of lattice paths $(\textbf{r},\textbf{b})$ such that one path (say) $\bf{r}$ is always lie above the path $\bf{b}$ That is, if the lattice paths are represented by  sequences $\textbf{r}= x_{0}y_{1} \dots y_{m}x_{n}$ and $\textbf{b}= x^{\prime}_{0}y^{\prime}_{0} \dots x^{\prime}_{n}y^{\prime}_{m} $ then $x_{i}> x^{\prime}_{j}$ and $y_{i} > y^{\prime}_{j}$ for $i,j \in \{1,2, \dots m, m+1, \dots, n\}$. A \textit{labelled parallelogram polynominoes} is a parallelogram polynominoes where the positive integers are placed in a square of a grid containing a vertical step of a red path $\bf{r}$ and horizontal step of a blue path $\bf{b}$ such that the numbers appearing from bottom to top are strictly increasing and numbers from left to right are strictly decreasing.    
  
\end{Def}
We denote the set of labelled parallelogram polynominoes of size $m \times n$ by $\textbf{LPP}(m,n)$.  
We consider three types of labels; red, black and blue. The bottom left (most) square containing the black number which contains both vertical red paths and horizontal blue paths and rest are red numbers along vertical steps and blue numbers along horizontal steps. 
\begin{Def}[\cite{PS}]
    Let $G$ be a graph, we define the toppling operator $\Delta(i)= deg(i)\alpha(i)- \sum_{j} F(i,j) \alpha(j)$, where $F(i,j)$ is a function either takes the value 0 or 1 and $\alpha(i)$ be a vector in $\mathbb Z ^{n+1}$ such that the $i^{th}$ position takes the value 1 and other position 0. For a subset $A$ of $[n+1]$, we have $\Delta(A)= \sum_{j \in A} \Delta(j)$.  
\end{Def}
\begin{Def}
The abelian sandpile model (ASM) on a graph $G$ is a dynamic process in which there is a random walk on a \textit{configuration } $c$ ( a vector in $\mathbb Z^{n}_{+}$ with assigning a number $c(i)$ to each vertex $i$) by taking a vertex at random and performing toppling operations. In the sandpile model of $G$, we denote $s$ as a distinguished vertex called \textit{sink}. A configuration $c$ on $(G,s)$, we think the coordinate $c(i)$ as the \textit{grains of sand} at the vertex $i$. The vertex $i$ is \textit{unstable} if $c(i) \geqslant deg(i)$. we call a configuration stable if none of its vertices are unstable except the sink. An unstable vertex may be \textit{toppled} and gives a new configuration by giving one grain of sand to each of its neighbours. We denote the toppling process by $c \xrightarrow{i} c^{\prime}$.  A configuration is said to be \textit{recurrent} if $c(s)= deg(s)$ and there exits a sequence of vertices $i_{1},\dots i_{k}$ such that $c \xrightarrow{i_{1} }c^{\prime} \xrightarrow{i_{2}} \dots \xrightarrow{i_{k}} c$. Every unstable configuration can be made stable by applying sequence of toppling. We denote the set of all recurrent configuration by \textbf{Rec}($G$).  
\end{Def}
\begin{Def}
    Let $c$ be a configuration. We say that $c$ is \textit{non negative} if $c(i) \geqslant 0 $. We say that a vertex $j$ is unstable if the configurations $c$ and $c-\Delta(j)$ are both non negative and we can topple the vertex $j$.    
\end{Def} 
From the above definition we see that toppling of vertex $j$ depends on the new configuration $c^{\prime}=c-\Delta(j)$. 
Before stating our main theorem, we need a result from \cite{DSSS19}.

 

\begin{theorem}[Dukes, Selig, Smith, Steingrimsson \cite{DSSS19}]
Let $\pi \in S_{n}$  and $G$ be a permutation graph associated to $\pi$. Let $s \in [n]$. Then for every spannig tree $T$ of $G$ there is a combinatorial bijection between set of spannig trees of $G$ and recurrent configuration of $(G,s)$.
     
\end{theorem}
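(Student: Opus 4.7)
The plan is to exhibit an explicit combinatorial bijection $\Phi \colon ST(G) \to \textbf{Rec}(G,s)$, using Dhar's burning algorithm as the main tool and the inversion structure of $\pi$ to break ties canonically. As a first step I would invoke the classical equality $|ST(G)| = |\textbf{Rec}(G,s)|$, which follows from the Matrix-Tree theorem together with the fact that recurrent configurations index the sandpile group of $(G,s)$. This reduces the task to constructing a well-defined injective map in one direction.

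To define $\Phi$, I would use $\pi$ to place a canonical total order $\prec_{\pi}$ on the edges of $G$, for instance ordering edges lexicographically by their endpoints with ties broken by $\pi$-values. Given a spanning tree $T$, the image $\Phi(T)$ would assign to each non-sink vertex $v$ a value related to its degree minus the number of $\prec_{\pi}$-externally active edges at $v$ with respect to $T$. This is the standard activity encoding that underlies sandpile–spanning-tree correspondences, and is tailored so that Dhar's criterion can be checked directly from the activity statistics.

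For the inverse map, given a recurrent configuration $c$, I would run Dhar's burning procedure from the sink $s$: iteratively burn any unburnt vertex $v$ satisfying $c(v)$ at least the number of its unburnt neighbors, and when several vertices are simultaneously burnable, select the one minimal under $\prec_{\pi}$. Each burning step singles out a unique edge from $v$ to the already-burnt component, and the collection of these edges forms a spanning subgraph which, by acyclicity of the burning order, is precisely the spanning tree $\Phi^{-1}(c)$.

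The main obstacle will be the verification that $\Phi$ is bijective. Two facts must be established: first, that $\Phi(T)$ really lies in $\textbf{Rec}(G,s)$, which amounts to checking Dhar's burning criterion for the activity-based configuration; second, that $\Phi$ and its candidate inverse cancel, which hinges on the observation that the $\prec_{\pi}$-burning order of the vertices matches the activity order of the edges of $T$. The decisive use of the permutation-graph hypothesis enters here: because $G$ is the inversion graph of $\pi$, the transitive structure of inversions guarantees that the edge chosen for a burning vertex can never close a cycle with edges chosen later, so the greedy procedure always yields a tree. Handling this compatibility cleanly, rather than via a general sandpile argument, is what turns an abstract cardinality equality into a genuine combinatorial bijection.
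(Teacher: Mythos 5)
First, note that the paper you are working from does not prove this statement at all: it is imported verbatim from Dukes--Selig--Smith--Steingr\'imsson \cite{DSSS19} as a black box, so there is no internal proof to compare against. Your proposal is therefore measured against the standard argument in the literature, and in outline it is the right one: encode a spanning tree by an activity-type configuration and invert via Dhar's burning algorithm with a canonical tie-breaking order. That is essentially the Cori--Le Borgne/Dhar correspondence, and it is the route \cite{DSSS19} follows.

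However, as written the proposal has two genuine problems. First, the two statements you yourself flag as ``the main obstacle'' --- that $\Phi(T)$ satisfies Dhar's recurrence criterion, and that $\Phi$ and the burning map are mutually inverse --- are exactly the content of the theorem; deferring them leaves the argument a plan rather than a proof. In particular the formula for $\Phi(T)(v)$ is left as ``a value related to'' degree minus external activity, and without pinning it down one cannot even begin the recurrence check. Second, the place where you claim the permutation-graph hypothesis is ``decisive'' is misidentified: the burning procedure selects, for each non-sink vertex, exactly one edge to the already-burnt component, so the resulting subgraph has $n-1$ edges and every vertex is connected to $s$; it is a spanning tree for \emph{any} connected graph, with no appeal to transitivity of inversions. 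The spanning-tree/recurrent-configuration bijection is a completely general fact about connected graphs with a chosen sink; what the permutation-graph (inversion graph) structure buys in \cite{DSSS19} is not the existence of the bijection but its compatibility with the statistics the authors care about (the level of a recurrent configuration versus external activity). So the one step you single out as using the hypothesis does not need it, while the steps that genuinely require work are left unverified.
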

\begin{Def}\label{Pf}
    A parking function of size $n $ is function $f: [n] \mapsto [n]$ such that for the set $B=\{1 \leqslant j \leqslant n : f(j) \geqslant i\}$ we have $\lvert B \rvert \leqslant n+1-i$. 
    
\end{Def}
Parking functions have appeared various areas of combinatorics. Postnikov and Shapiro introduced graphical parking function or G-parking function (see definition in Section~\ref{PS} ). In this section instead of function, we use the term configuration. We denote $\textbf{PC}(G)$ be the set of all $G$-parking configuration.

\begin{Def}[\cite{PS}]
    A configuration $c$ is said to be graphical parking configuration or $G$- parking configuration if $c$ is non-negative and $c- \Delta(A)$ is not non negative for a subset $A$ of the set $\{1,2, \dots n+1\}$.
\end{Def}

\begin{Def}
    A coloring $c$ is a function from a class of object $\mathcal{X}$ to a set of colors $\mathcal{A}$. In this paper, the colors of vertices and paths are define by $c: \mathcal{X} \mapsto \{\text{red}, \text{blue}\}$. In this paper $\mathcal{X}$ is either $V$ the set of vertices or $P$ set of lattice paths in an $m \times n$ grid. 
\end{Def}

In the recent work \cite{DIBRW} , the authors defined a bijection between the labelled parallelogram polyomino and unique tiered tree with three tiers where the root of the tree in tier 2. Formally the result is stated below.
\begin{theorem}[\cite{DIBRW}]
There is a combinatorial bijection between set of labelled prallelogram polyominoes  $\textbf{LPP}(m+1,n+1)$ and set of rooted tiered trees $\textbf{TT}(m,1,n)$ with the root lies in tier 2.
\end{theorem}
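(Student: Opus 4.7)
The plan is to construct an explicit bijection $\Phi: \textbf{LPP}(m+1,n+1) \to \textbf{TT}(m,1,n)$ together with its inverse $\Psi$. The label sets match naturally: a labelled parallelogram polyomino carries exactly one black label (in the bottom-left square), $n$ red labels along the vertical steps of $\mathbf{r}$, and $m$ blue labels along the horizontal steps of $\mathbf{b}$; a tree in $\textbf{TT}(m,1,n)$ has one vertex in tier $2$ (the root), $n$ vertices in tier $3$, and $m$ vertices in tier $1$. On labels, I would therefore declare $\Phi$ to send the black label to the root, each red label to a tier-$3$ vertex, and each blue label to a tier-$1$ vertex, with $\Psi$ doing the reverse.

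To define the edges of the tree from the polyomino, I would scan the figure column by column from left to right. Each column contributes exactly one vertical red step (and hence one tier-$3$ label), and I would join that label to the blue label whose horizontal step lies immediately below in the same row of the polyomino (taking the black root label when that horizontal step is the very first one). The edges incident to the root are then produced by the boundary cells adjacent to the bottom-left corner, and every deeper descendant edge is obtained by iterating the column rule. In the reverse direction, $\Psi$ starts at the root and grows the two paths simultaneously: the children of the root in tier $3$, listed in increasing label order, are translated into vertical steps of $\mathbf{r}$; the children in tier $1$ become horizontal steps of $\mathbf{b}$; deeper descendants extend the staircase outline according to their own tier membership.

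The verification splits into three parts. First, that $\Phi$ produces a valid tiered tree: the monotonicity rules on the polyomino labels (strictly increasing from bottom to top within a column, strictly decreasing from left to right within a row) translate directly into the tier condition $i<j \Rightarrow \textbf{t}(i)<\textbf{t}(j)$ on adjacent vertices, and a counting argument combined with reachability from the bottom-left corner shows the $m+n$ edges on $m+n+1$ vertices form a tree. Second, that $\Psi$ produces a valid labelled parallelogram polyomino, in particular that $\mathbf{r}$ stays weakly above $\mathbf{b}$; this follows from the compatibility inequalities between the tier-$1$ and tier-$3$ vertices in the tree. Third, that $\Phi \circ \Psi$ and $\Psi \circ \Phi$ are identities, which reduces to local reversibility of the column-by-column construction and therefore follows by induction on $m+n$.

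The main obstacle I anticipate is the edge-assignment step of $\Phi$: one must verify simultaneously that the prescribed rule yields exactly a tree (no cycles, no disconnected components) and that it globally respects the tier constraint. The crucial structural lemma is that each vertical red step contributes a single well-defined edge to a previously placed label, and that these edges together with the root-incident ones total exactly $m+n$. Once this accounting is established, the labelling monotonicity of $\textbf{LPP}$ automatically forces the tiered-tree conditions, and the whole bijection follows.
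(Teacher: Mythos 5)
The paper does not actually prove this statement: it is quoted directly from \cite{DIBRW} and used as a black box (notably inside the proof of Theorem~\ref{main}), so there is no in-paper argument to compare yours against. Judged on its own terms, your proposal is an outline rather than a proof, and its central construction rests on a false structural claim. You assert that ``each column contributes exactly one vertical red step.'' This is not true: the upper path of an $(m+1)\times(n+1)$ parallelogram polyomino has $n+1$ north steps distributed over at most $m+2$ abscissae, and nothing prevents several north steps (hence several red labels) from stacking in a single column --- Figure~\ref{fig4} of the paper itself shows a column carrying the two red labels $7$ and $10$. Since your edge-assignment rule for $\Phi$ is predicated on the one-red-step-per-column picture, the map is not well defined as stated, and the subsequent accounting claim that the edges ``total exactly $m+n$'' has no basis.

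Beyond that, the three verification steps you list are precisely the content of the theorem, and you defer all of them. That the prescribed edges form a tree (acyclic and connected); that every blue--red adjacency forces the blue label to be strictly smaller than the red label, which is what the tier condition $i<j\Rightarrow \mathbf{t}(i)<\mathbf{t}(j)$ actually demands here since tier $1<$ tier $3$; and that $\Psi$ produces a genuine element of $\textbf{LPP}(m+1,n+1)$ with the path $\mathbf{r}$ staying above $\mathbf{b}$ --- each of these is asserted to ``follow'' from monotonicity or compatibility without an argument. You correctly identify the edge-assignment step as the main obstacle, but identifying the obstacle is not the same as overcoming it; as written, the proposal does not establish the bijection.
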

\begin{Def}
    The \textit{area} of a labelled parallelogram polyomino is the number of cells between two paths that do not contain any label such that the labels appearing in the left are strictly greater than the labels below them.
\end{Def}
In this paper we establish a combinatorial bijection between labeled parallelogram polyomino and graphical parking configuration.
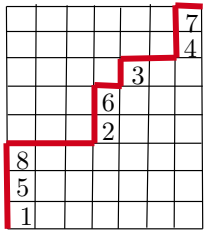
\begin{figure}
\centering
\tikzset{every picture/.style={line width=0.45pt}} 

\begin{tikzpicture}[x=0.45pt,y=0.45pt,yscale=-1,xscale=1]

\draw    (213,11) -- (214,198) ;
\draw    (377,11) -- (378,198) ;
\draw    (355,10) -- (354,197) ;
\draw    (334,9) -- (331,197) ;
\draw    (309,12) -- (307,198) ;
\draw    (287,10) -- (285,198) ;
\draw    (238,9) -- (237,198) ;
\draw    (264,10) -- (262,198) ;
\draw    (213,11) -- (377,11) ;
\draw    (213,32) -- (377,32) ;
\draw    (213,54) -- (377,54) ;
\draw    (214,78) -- (378,78) ;
\draw    (214,102) -- (378,102) ;
\draw    (213,126) -- (377,126) ;
\draw    (214,149) -- (378,149) ;
\draw    (214,175) -- (378,175) ;
\draw    (214,198) -- (378,198) ;
\draw [color={rgb, 255:red, 208; green, 2; blue, 27 }  ,draw opacity=1 ][line width=2.25]    (213,126) -- (214,198) ;
\draw [color={rgb, 255:red, 208; green, 2; blue, 27 }  ,draw opacity=1 ][line width=2.25]    (287,126) -- (213,126) ;
\draw [color={rgb, 255:red, 208; green, 2; blue, 27 }  ,draw opacity=1 ][line width=2.25]    (287,77) -- (287,126) ;
\draw [color={rgb, 255:red, 208; green, 2; blue, 27 }  ,draw opacity=1 ][line width=2.25]    (310,78) -- (287,77) ;
\draw [color={rgb, 255:red, 208; green, 2; blue, 27 }  ,draw opacity=1 ][line width=2.25]    (309,55) -- (309,78) ;
\draw [color={rgb, 255:red, 208; green, 2; blue, 27 }  ,draw opacity=1 ][line width=2.25]    (356,54) -- (309,55) ;
\draw [color={rgb, 255:red, 208; green, 2; blue, 27 }  ,draw opacity=1 ][line width=2.25]    (355,10) -- (356,54) ;
\draw [color={rgb, 255:red, 208; green, 2; blue, 27 }  ,draw opacity=1 ][line width=2.25]    (378,11) -- (355,10) ;

\draw (222,178) node [anchor=north west][inner sep=0.75pt]   [align=left] {1};
\draw (219,154) node [anchor=north west][inner sep=0.75pt]   [align=left] {5};
\draw (219,131) node [anchor=north west][inner sep=0.75pt]   [align=left] {8};
\draw (291,107) node [anchor=north west][inner sep=0.75pt]   [align=left] {2};
\draw (291,82) node [anchor=north west][inner sep=0.75pt]   [align=left] {6};
\draw (316,60) node [anchor=north west][inner sep=0.75pt]   [align=left] {3};
\draw (360,37) node [anchor=north west][inner sep=0.75pt]   [align=left] {4};
\draw (361,16) node [anchor=north west][inner sep=0.75pt]   [align=left] {7};
\end{tikzpicture}
\caption{A lattice path of size $7 \times 8$.}\label{fig3}
\end{figure}

\begin{theorem}\label{main}
    Let $U= (U_{1},U_{2},U_{3})$ be an ordered set partition of $[m+n-1]$. Then there exists a bijection between the set of labelled parallelogram polyominoes $\textbf{LPP}(U)$  and the G-parking configurations \textbf{PC}($G_{U}$), where $G_{U}$ is the compatible graph of the tiered tree $T$.
\end{theorem}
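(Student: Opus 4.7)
The plan is to exhibit the desired correspondence as a composition: first apply the DIBRW bijection $\Phi_{\mathrm{LPP}}$ of the previous theorem (extended from the case $(m,1,n)$ to the ordered partition $U=(U_1,U_2,U_3)$ by tracking each label into its tier) to go from $\textbf{LPP}(U)$ to tiered rooted trees whose tier data is $U$, then use the Postnikov--Shapiro bijection $\Phi_{\mathrm{PS}}$ between spanning trees of a graph and its parking configurations. Since every tiered tree with tier data $U$ is a spanning tree of the compatibility graph $G_U$, the composition $\Phi_{\mathrm{PS}} \circ \Phi_{\mathrm{LPP}}$ is well-defined, and I plan to make it explicit using the algorithm on unstable vertices advertised in the introduction, rather than invoking Postnikov--Shapiro as a black box.

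Concretely, for $(r,b) \in \textbf{LPP}(U)$, I would first read off the tiered tree $T$ (rooted at the unique black label in the bottom-left cell), order its vertices $v_1, \dots, v_{m+n-1}$ by a depth-first traversal from the root $s$, and define a candidate configuration by
\[
c(v_i) \;=\; \deg_{G_U}(v_i) \;-\; \big|\{v_j : j > i,\ (v_i,v_j) \in E(G_U)\}\big| \;-\; 1
\]
for non-root $v_i$, with $c(s)=\deg_{G_U}(s)$. This quantity is exactly the number of ``area cells'' of the polyomino contributed by the row/column of $v_i$'s label and is manifestly non-negative. To verify that $c \in \textbf{PC}(G_U)$ I would check the non-toppleability criterion: for each non-empty $A \subseteq V(G_U)\setminus\{s\}$, pick the vertex $v \in A$ that is latest in the tier-lexicographic order; the strict increase (bottom to top) and strict decrease (left to right) conditions defining $\textbf{LPP}(U)$ then force $v$ to have strictly more compatible neighbours outside $A$ than $c(v)$, so $c - \Delta(A)$ has a negative coordinate at $v$.

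The main obstacle is the inverse direction, where I must invert the toppling process: given $c \in \textbf{PC}(G_U)$, I plan to repeatedly identify the unique vertex $v$ of smallest label such that $c + \Delta(v)$ is still a parking configuration on $G_U \setminus \{v\}$, record the corresponding grid cell and the colour dictated by the tier of $v$, and recurse on the smaller graph. Proving that at each step exactly one such vertex exists, and that the cells produced assemble into a genuine labelled parallelogram polyomino satisfying $x_i > x'_j$ and $y_i > y'_j$, is the technical heart of the argument; I would handle it by induction on $|V(G_U)|$, using the invariant that deleting an extremal vertex of $G_U$ yields the compatibility graph of a smaller tiered tree and that the restricted configuration is again a parking configuration on it, so the inductive hypothesis applies and the two halves of the paths can be glued without violating the parallelogram condition.
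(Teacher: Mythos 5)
Your overall architecture is close to the paper's: both routes go through the unique tiered tree of \cite{DIBRW}, assign to each non-root label a count of ``earlier/compatible neighbours'' as its grain value, certify the parking property by exhibiting a vertex of $A$ with fewer grains than its out-degree $d_A$, and invert by a peeling/toppling process. But there are two concrete gaps. First, your configuration $c(v_i)$ is defined relative to \emph{a} depth-first traversal of $T$, which is not canonical: different DFS orders of the same tree give different configurations, so as written $\Phi$ is not yet a function, let alone an injection on $\textbf{LPP}(U)$. Worse, the order you use to \emph{verify} the parking condition is a different one (``tier-lexicographic'', taking the \emph{latest} vertex of $A$), and the standard argument for configurations of the form ``number of earlier neighbours minus one'' requires taking the \emph{earliest} vertex of $A$ in the \emph{same} order that defined $c$ (so that all of its earlier neighbours lie outside $A$, giving $d_A(v) \geqslant c(v)+1$). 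With your mismatched pair of orders the claimed inequality at the latest vertex of $A$ does not follow from the polyomino's monotonicity conditions. The paper avoids both problems by extracting a single canonical order directly from the polyomino via the bounce path, and by defining $\alpha(P)(j)$ geometrically as the number of white cells above (resp.\ to the right of) the cell of $j$; your identification of $c(v_i)$ with that area statistic is asserted but not proved, and it is exactly where the DFS ambiguity would have to be resolved.

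Second, the inverse you sketch --- repeatedly finding the unique smallest-labelled $v$ with $c+\Delta(v)$ still a parking configuration on $G_U\setminus\{v\}$ --- is not visibly the inverse of your forward map: the forward map never topples anything, it only counts neighbours, so you owe an argument that this peeling recovers the DFS order and hence the tree and the polyomino. The uniqueness of such a $v$ at each step is itself a nontrivial claim (it is essentially a tie-breaking version of Dhar's burning algorithm) and is left unsupported; the paper instead reconstructs the polyomino by running the toppling in the bounce-path order, listing newly unstable blue labels (decreasing) and red labels (increasing) and placing them row by row and column by column, which ties the reconstruction to the same canonical order used in the forward direction. To repair your version you would need to fix one specific traversal (e.g.\ the one induced by the bounce path, or a queue/stack discipline with an explicit tie-break), prove the area identity for that traversal, and then show your peeling step reverses that specific traversal.
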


\begin{proof}
  Let us define a map $ \alpha: \textbf{LPP}(U) \rightarrow \textbf{PC}(G_{U}) $. We need to show that for any  path $P $ in
  $\textbf{LPP}(U) $, the image $ \alpha(P) \in \textbf{PC}(G_{U}) $. There exists a unique tiered tree $T \in \textbf{TT}(U)$ with three tiers with the root lies in tier 2 \cite{DIBRW}. As mentioned in Section ~\ref{def}, for a tiered tree there is a compatible graph which contains tiered tree as a spannig tree. Let $G_{U}$ be that graph. For each  pair of labels $(i,j)$ such that $ i < j$, the corresponding cells are colored white. Therefore, the white squares corresponds to the edges of $G_{U}$. That is, there is an edge $(i,j)$ where $i$ either blue or black and $j$ is either red or black. To show $\alpha(P)$ is parking configuration, we need to show $\alpha(P)-\Delta(I)$ is not non negative for a subset $I$ of $[m+n-1]$. Thus we need an ordering of labels to identify the sequence in which vertices are to be toppled. \\
\indent  Let us define a \textit{bounce path} $P$ starting from $(0,0)$ going east. When it meets a vertical step of a blue path it turns north. Again when it meets the horizontal step of a red path it turns east. Following this steps, the path $P$ ends at the $(m,n)$ coordinate. In order to get a toppling order, we project onto the horizontal steps of the bounce path which contains blue labels in its column. Similarly we project onto the vertical steps of the bounce path which contains red labels in its row. Thus we obtain a toppling order using $P$ denoted by $\textbf{TOP}(P)$. We start toppling the unstable vertices in the same order they appeared in $\textbf{TOP}(P)$. We disregard the black label in the toppling order which is the sink vertex (otherwise it will give a recurrent configuration).\\
\indent We now show the image $\alpha(P)$ is a $G$-parking configuration. Since $\alpha(P)$ is a configuration we assume that $\alpha(P)$ is stable. Let $j$ be the first vertex in our toppling order and $j \in I \subseteq [m+n-1]$. Define $\alpha(p)(j)$ be the number of white squares above (or to the right) of the squares containing $j$ when $j$ is blue (or red). Therefore in the toppling of $j$, it has as many grains as the squares above it (or right to it). Thus, $\alpha(P)(j)$ is a configuration with $j^{th}$ position is the number of white squares above it (or right to it). When we topple $j$, it loses $deg(j)$ number of grains and the neighbours of $j$ will receive one grain each. Let $A=\{1,2, \dots , m+n\}$ and $I$ be the subset of $[m+n-1]$. By our assumption, the set $A$ consists of three types of labels blue, red and black. The vertex  $j$ can give one grain to a neighbour which is not inside the set $I$. We write $deg_{I}(j)$ as the number of white squares above (or right) to $j$ such that there is an edge from the label $j \in I$ to a label from $A \setminus I$. Therefore, while toppling $j$ we have
\begin{equation}\label{eq}
    \alpha(P)(j) - deg(j) = \alpha(P)(j)- deg_{A}(j) +deg_{ I}(j).
\end{equation}
                                                    
Since $j$ contribute grains out side the set $I$ the right hand side of the Equation(~\ref{eq}) becomes $\alpha(P)(j) - deg_{A \setminus I}(j)$. Since $\alpha(P)(j)$ contains as many grains as the square above it, therefore we have
$$   \alpha(p)(j) -deg_{A \setminus I}(j) \geqslant deg_{A}(j). $$
Since $\alpha(P)$ is stable we have $\alpha(P)(j) \leqslant deg(j)-1$ for all $j$. This leads to a contradiction. Therefore, $\alpha(P)(j) -deg(j)$ is not non negative. After iterating this process for finitely many $j'$s we have the desired result. Hence the map is well defined.\\
Now we show the inverse image $\alpha^{-1}(P)$ exists. Let $\alpha(P)$ is a $G$-parking configuration. First we fix the sink vertex with black label placed at the bottom left corner. Let $c \in \textbf{PC}(G)$ is a stable configuration  and $c-\Delta(I)$ is non negative. We keep choosing unstable blue vertices from $c- \Delta(I)$. In this process we keep track the newly unstable vertices. We track blue vertices in decreasing order and red vertices in increasing order. We consider the subset $I$ consists of blue colour (or red) labels
 \begin{itemize}
     \item First fix the black level vertex to the left most corner in the bottom. 
     \item List all the unstable blue vertices in our toppling order from the set $I$ until it is empty. 
     \item List all the unstable red vertices from $A \setminus I$ until it is empty. 
 \end{itemize}
we repeat this process until all the unstable vertices are listed. We reconstruct the parallelogram polyomino from $\alpha^{-1}(P)$ as follows: the blue labels (or red) that became unstable in $c- \Delta(I)$, place them in the first row(or column) of the polyomino. Then for each blue (or red) vertex $j$, the vertices that become unstable upon its toppling will be the vertices that are in same column(or row) as $j$. Thus, after placing all the unstable vertices we will get polyomino. Hence $\alpha^{-1}(P)$ exists.  
\end{proof}
\begin{table}[h!]
  \begin{center}
     \begin{tabular}{|l|c|c|r|} 
     \hline
     & \textbf{Black} &  \textbf{Red} & \textbf{Blue}\\
           \hline
      $j$ & 8 &11 7 10 12 &3 6 5 4 1 9 2\\
      \hline
     $\alpha(p)(j)$ & 9 & 7 \hspace{.2mm}  6 \hspace{.2mm} 6  4&4 2 3 3 2 1 1 \\
     \hline
    \end{tabular}
  \end{center}
     \caption{Intial configuration of the polyominoes in Figure ~\ref{fig4}}\label{t1}
    \label{tab:table1}
\end{table}
Note that, instead of one coloured label, we may take the set $I$ with labels consisting both blue or red labels which we disregard as from the definition of $deg_{I}(i)$ defined in the beginning of the proof.
\begin{Example}
In Figure ~\ref{fig4}, for the path $P$ we have the initial configuration $$\alpha(P)= (9,7,6,6,4,4,2,3,3,2,1,1).$$ The bounce path is shown as a dotted black line. We determine the stability of the vertices (excluding the vertex $8$, which is the sink vertex) by using the toppling operator $\Delta(j)$ for a vertex $j$. For instance, $\alpha(P)- \Delta(3)= (9,7,1,6,4,4,3,2,3,3,3,1,1)$ which is non negative. So the vertex $3$ is unstable. Again $\alpha(P)-\Delta(6)= (9,7,6,6,4,1,2,3,3,2,0,1)$. So the vertex $6$ is also unstable. In this case we begin the process by toppling the vertex $3$. The vertex $3$ will loose $deg(3)$ number of grains, results instability of the vertices $7$ and $10$. Therefore, the blue label $3$ will be in the first row and red labels will be in it's column. Then for the instability of the vertex $6$, the vertex $11$ will be unstable. Therefore, the blue label $6$ will be placed in the second row and red label $11$ will be placed in corresponding column and so on.

\end{Example}
\begin{figure}
\centering   
\tikzset{every picture/.style={line width=0.75pt}} 

\begin{tikzpicture}[x=0.60pt,y=0.60pt,yscale=-1,xscale=1]

\draw [line width=0.75]    (152,31) -- (154,181.6) ;
\draw [color={rgb, 255:red, 0; green, 0; blue, 0 }  ,draw opacity=1 ]   (241,30) -- (243,180.6) ;
\draw [color={rgb, 255:red, 0; green, 0; blue, 0 }  ,draw opacity=1 ]   (211,31) -- (213,181.6) ;
\draw [line width=0.75]    (182,31) -- (184,181.6) ;
\draw [color={rgb, 255:red, 0; green, 0; blue, 0 }  ,draw opacity=1 ]   (307,29.8) -- (307,179.8) ;
\draw [color={rgb, 255:red, 0; green, 0; blue, 0 }  ,draw opacity=1 ]   (273,30) -- (275,180.6) ;
\draw    (370,28) -- (371,180.8) ;
\draw    (341,30.8) -- (341,179.6) ;
\draw    (154,180.6) -- (403,178.8) ;
\draw    (152,62) -- (404,58.8) ;
\draw    (151,90) -- (402,88.8) ;
\draw    (153,120) -- (256.03,118.38) -- (403,118.8) ;
\draw [color={rgb, 255:red, 0; green, 0; blue, 0 }  ,draw opacity=1 ]   (153,152) -- (404,148.8) ;
\draw [line width=0.75]    (402,27.2) -- (404,178.8) ;
\draw [line width=0.75]    (193,179.6) -- (154,180.6) ;
\draw    (151,32) -- (403,28.8) ;
\draw  [fill={rgb, 255:red, 155; green, 155; blue, 155 }  ,fill opacity=1 ] (152,32) -- (183,32) -- (183,60.8) -- (152,60.8) -- cycle ;
\draw  [fill={rgb, 255:red, 155; green, 155; blue, 155 }  ,fill opacity=1 ] (211,32) -- (242,32) -- (242,60.8) -- (211,60.8) -- cycle ;
\draw  [fill={rgb, 255:red, 155; green, 155; blue, 155 }  ,fill opacity=1 ] (212,151.8) -- (243,151.8) -- (243,180.6) -- (212,180.6) -- cycle ;
\draw [color={rgb, 255:red, 208; green, 2; blue, 27 }  ,draw opacity=1 ][line width=2.25]    (153,120) -- (154,180.6) ;
\draw [color={rgb, 255:red, 208; green, 2; blue, 27 }  ,draw opacity=1 ][line width=2.25]    (183,60.8) -- (184,121.4) ;
\draw [color={rgb, 255:red, 208; green, 2; blue, 27 }  ,draw opacity=1 ][line width=2.25]    (184,119.4) -- (153,120) ;
\draw [color={rgb, 255:red, 208; green, 2; blue, 27 }  ,draw opacity=1 ][line width=2.25]    (243,60.8) -- (184.5,62.3) ;
\draw [color={rgb, 255:red, 208; green, 2; blue, 27 }  ,draw opacity=1 ][line width=2.25]    (242,31) -- (243,60.8) ;
\draw [color={rgb, 255:red, 208; green, 2; blue, 27 }  ,draw opacity=1 ][line width=2.25]    (403,28.8) -- (242,31) ;
\draw [color={rgb, 255:red, 74; green, 144; blue, 226 }  ,draw opacity=1 ][line width=2.25]    (154,180.6) -- (212,180.6) ;
\draw [color={rgb, 255:red, 74; green, 144; blue, 226 }  ,draw opacity=1 ][line width=2.25]    (212,151.8) -- (213,179.6) ;
\draw [color={rgb, 255:red, 74; green, 144; blue, 226 }  ,draw opacity=1 ][line width=2.25]    (214,151.8) -- (308,148.8) ;
\draw [color={rgb, 255:red, 74; green, 144; blue, 226 }  ,draw opacity=1 ][line width=2.25]    (308,115.8) -- (308,148.8) ;
\draw [color={rgb, 255:red, 74; green, 144; blue, 226 }  ,draw opacity=1 ][line width=2.25]    (303,118.8) -- (344,117.8) ;
\draw [color={rgb, 255:red, 74; green, 144; blue, 226 }  ,draw opacity=1 ][line width=2.25]    (341,89) -- (342,116.8) ;
\draw [color={rgb, 255:red, 74; green, 144; blue, 226 }  ,draw opacity=1 ][line width=2.25]    (343,89.8) -- (402,88.8) ;
\draw [color={rgb, 255:red, 74; green, 144; blue, 226 }  ,draw opacity=1 ][line width=2.25]    (403,28.8) -- (403,88.8) ;
\draw [line width=2.25]  [dash pattern={on 2.53pt off 3.02pt}]  (154,152.6) -- (212,151.6) ;
\draw [line width=2.25]  [dash pattern={on 2.53pt off 3.02pt}]  (212,151.8) -- (211,60.8) ;
\draw [line width=2.25]  [dash pattern={on 2.53pt off 3.02pt}]  (213,61.3) -- (403,58.8) ;
\draw [line width=2.25]  [dash pattern={on 2.53pt off 3.02pt}]  (403,58.8) -- (403,28.8) ;
\draw [color={rgb, 255:red, 155; green, 155; blue, 155 }  ,draw opacity=1 ][line width=2.25]  [dash pattern={on 2.53pt off 3.02pt}]  (176,154) -- (183,165.8) ;
\draw [color={rgb, 255:red, 155; green, 155; blue, 155 }  ,draw opacity=1 ][line width=2.25]  [dash pattern={on 2.53pt off 3.02pt}]  (169,153.8) -- (186,181.6) ;
\draw [color={rgb, 255:red, 155; green, 155; blue, 155 }  ,draw opacity=1 ][line width=2.25]  [dash pattern={on 2.53pt off 3.02pt}]  (165,179.8) -- (156,165.8) ;
\draw [color={rgb, 255:red, 155; green, 155; blue, 155 }  ,draw opacity=1 ][line width=2.25]  [dash pattern={on 2.53pt off 3.02pt}]  (167,155) -- (158,166.8) ;
\draw [color={rgb, 255:red, 155; green, 155; blue, 155 }  ,draw opacity=1 ][line width=2.25]  [dash pattern={on 2.53pt off 3.02pt}]  (168,180) -- (179,172.8) ;

\draw (162,160) node [anchor=north west][inner sep=0.75pt]   [align=left] {{\large 8}};
\draw (192,157) node [anchor=north west][inner sep=0.75pt]  [color={rgb, 255:red, 74; green, 144; blue, 226 }  ,opacity=1 ] [align=left] {{\large 3}};
\draw (221,126) node [anchor=north west][inner sep=0.75pt]  [color={rgb, 255:red, 74; green, 144; blue, 226 }  ,opacity=1 ] [align=left] {{\large 6}};
\draw (253,127) node [anchor=north west][inner sep=0.75pt]  [color={rgb, 255:red, 74; green, 144; blue, 226 }  ,opacity=1 ] [align=left] {{\large 5}};
\draw (284,126) node [anchor=north west][inner sep=0.75pt]  [color={rgb, 255:red, 74; green, 144; blue, 226 }  ,opacity=1 ] [align=left] {{\large 4}};
\draw (159,128) node [anchor=north west][inner sep=0.75pt]  [color={rgb, 255:red, 208; green, 2; blue, 27 }  ,opacity=1 ] [align=left] {{\large 11}};
\draw (190,98) node [anchor=north west][inner sep=0.75pt]  [color={rgb, 255:red, 208; green, 2; blue, 27 }  ,opacity=1 ] [align=left] {{\large 7}};
\draw (185.5,68.3) node [anchor=north west][inner sep=0.75pt]  [color={rgb, 255:red, 208; green, 2; blue, 27 }  ,opacity=1 ] [align=left] {{\large 10}};
\draw (318,95) node [anchor=north west][inner sep=0.75pt]  [color={rgb, 255:red, 74; green, 144; blue, 226 }  ,opacity=1 ] [align=left] {{\large 1}};
\draw (350,68) node [anchor=north west][inner sep=0.75pt]  [color={rgb, 255:red, 74; green, 144; blue, 226 }  ,opacity=1 ] [align=left] {{\large 9}};
\draw (381,65) node [anchor=north west][inner sep=0.75pt]  [color={rgb, 255:red, 74; green, 144; blue, 226 }  ,opacity=1 ] [align=left] {{\large 2}\\};
\draw (247,38) node [anchor=north west][inner sep=0.75pt]  [color={rgb, 255:red, 208; green, 2; blue, 27 }  ,opacity=1 ] [align=left] {{\large 12}};

\end{tikzpicture}

\caption{ A labelled parallelogram polynomino}\label{fig4}
\end{figure}
\vspace{-.7cm}
\section{Tiered trees and P-S algebra}\label{PS}
\subsection{The PS algebra}
 In this section we define the $G$-parking function for a tiered graph $G$. The notations, definitions and results which we have discussed in this section, are defined in \cite{PS}. Postnikov and Shapiro defined two graded algebras on directed graphs and showed that their Hilbert series is equal to the number of spannig trees of the corresponding directed graph. Here we define the general constructions of $G$-parking function for rooted tiered trees. Let $G= (V,E)$ be a graph with vertices labelled with $ 1,2, \dots, n$. For a subset $I$ of $\{ 1,2, \dots ,n\}$ and $i \in I$ let
$$ d_{I}(i)= \sum_{k \notin I} a_{ik} ,  $$
which is the number of  edges that connects a vertex from $I$ to vertex outside of $I$. The $G$-parking function is the sequence $(b_{1}, \dots, b_{n}  ) $ where there exists a subset $I $ of $\{1,2, \dots, n\}$ for which $b_{i} < d_{I}(i)$. Let $\mathbb K$ be a field and $\mathcal{I}$ be a monomial ideal in $\mathbb K[x_{1}, \dots, x_{n}]$  generated by the monomials
$$ m_{I} = \prod_{i \in I} x_{i}^{d_{I}(i)},  $$
where $I$ is a subset of  $\{1,2, \dots, n\}$. Define the algebra $\mathcal{A}_{G}$ as the quotient $\mathcal{A}_{G}= \mathbb K[x_{1}, \dots, x_{n}]\big/{\mathcal{I} }$. 

We now recall the definition of  the power algebra $\mathcal{B}_{G}$ whose quotient is modulo $\mathcal{J} $, where $\mathcal{J}$ is the ideal in the polynomial ring 
$ {\mathbb K[x_{1}, \dots, x_{n}]}$ spanned by the monomials,
$$  p_{I}= ( \sum_{i \in I}x_{i} )^{D_{I}+1} ,$$
where $D_{I}$ is the total number of edges that joins a vertex $i$ from $I$ to a vertex outside $I$ that is, $V(G) \setminus I$. The square free algebras associated to a vertex labelled graph was introduced in \cite{PS}. For a field $\mathbb K$ of characteristic zero, let $\Phi_{G}$ be a graded algebra generated by the commuting variables $\phi_{e}$ for all edges $e \in G$ subject to the following relation
\[ (\phi_{e})^{2} =0  \hspace{1cm} \text{for any edge } e \in G,  
  \]  
  \[\prod_{e \in H} \phi_{e} =o \hspace{1cm} \text{for any nonslim subgraph} \hspace{0.1cm}H  \subset G,
\]
where a \textit{slim} subgraph $H$ of $G$ is a subgraph such that the complement subgraph $G \setminus H $ is connected. Postnikov and Shapiro defined the algebra $\mathcal{C}_{G}$ associated to spannig trees  which is a subalgebra of $\Phi_{G}$ spanned by the elements $$ X_{i}= \sum_{e \in G} c_{i,e} \phi_{e}, $$
where \[  c_{i,e} = \begin{cases}
    1 & \mbox{if e=(i,j) , i } < \mbox{ j }, \\
    -1 & \mbox{if e=(i,j), i   } > \mbox{j}, \\
    0 & \mbox{otherwise}. 
\end{cases}
\]
 The algebra $\mathcal{C}_{G}$ is called the spannig tree counting algebra of a graph $G$.
\begin{Example}
For the tiered graph $G$ in Figure~\ref{fig2}, the algebra $\mathcal{C}_{G}$ is generated by $\{X_{1}, X_{2}, X_{3}, X_{4}, X_{5}, X_{6}\}$ where $X_{i}$'s are given by the following relations
\begin{align*}
    X_{1} &= \phi_{13}+\phi_{14}+\phi_{15}+\phi_{16} ,~~~
    X_{2} = \phi_{23}+\phi_{24}+\phi_{25}+\phi_{26},~~~
    X_{3}= - \phi_{31}-\phi_{32}+\phi_{36} ,\\
    X_{4}& =  -\phi_{41}-\phi_{42}+\phi_{46},~~~
    X_{5} =  -\phi_{51}-\phi_{52}+\phi_{56},~~~
    X_{6}= -\phi_{61}-\phi_{62}-\phi_{63}-\phi_{64}-\phi_{65}.
\end{align*}
\end{Example}
The following theorem is one of the main results of \cite{PS}, which shows that why the algebra $\mathcal{C}_{G}$ is called algebra counting spannig trees of a graph $G$. 
\begin{theorem}[Postnikov and Shapiro \cite{PS}]
The dimension of the algebra $\mathcal{C}_{G}$ over the field $\mathbb K$ is equal to the the number of spannig trees of $G$. Moreover, the dimension of the $ k$-graded component  $\mathcal{C}_{G} (k)$ of the algebra $\mathcal{C}_{G}$ is the number of spannig trees with external activity $e(G) -n-k$ where $e(G)$ is the number of edges of $G$.     
\end{theorem}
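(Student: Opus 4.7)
The plan is to compute the Hilbert series of $\mathcal{C}_G$ by establishing a chain of graded isomorphisms $\mathcal{C}_G \cong \mathcal{B}_G \cong \mathcal{A}_G$, where the monomial quotient $\mathcal{A}_G$ admits a standard basis in bijection with $G$-parking functions, and then to match this Hilbert series with a specialization of the Tutte polynomial of $G$.

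First, I would construct the map $\mathcal{B}_G \to \mathcal{C}_G$ by sending $x_i \mapsto X_i$ and verify that it kills the defining ideal $\mathcal{J}$. For a subset $I \subseteq V(G)$, the sum $\sum_{i\in I} X_i$ expands in the $\phi_e$-variables as a signed sum over edges with at least one endpoint in $I$ (with interior edges cancelling), so that $(\sum_{i\in I}X_i)^{D_I+1}$ expands into products of $\phi_e$'s whose supports must exceed $D_I$ in size after applying $\phi_e^2=0$. Any such edge set strictly exhausts the $D_I$ crossing edges and must contain a subgraph whose complement is disconnected, hence non-slim, so each resulting monomial vanishes in $\Phi_G$. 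This shows $p_I \mapsto 0$, and the map factors to $\mathcal{B}_G \to \mathcal{C}_G$.

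Second, a standard deformation argument, varying the coefficients in the expansions $p_I = (\sum_{i\in I}x_i)^{D_I+1}$ and passing to the associated graded with respect to a term order, identifies $\dim \mathcal{B}_G$ with $\dim \mathcal{A}_G$. The normal form basis of $\mathcal{A}_G = \mathbb{K}[x_1,\ldots,x_n]/\mathcal{I}$ consists of monomials $x^b$ not divisible by any $m_I = \prod_{i\in I} x_i^{d_I(i)}$, and by the definition of $G$-parking function these exponent vectors are precisely the elements of $\textbf{PC}(G)$. Since the number of $G$-parking functions equals $|ST(G)|$ (via the bijection established in \cite{PS} using the Matrix-Tree Theorem), this yields the total dimension count. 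Combined with the evident surjectivity of $\mathcal{B}_G \to \mathcal{C}_G$, a dimension comparison forces injectivity and completes the isomorphism.

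For the graded refinement, the Hilbert series $H_{\mathcal{C}_G}(q) = \sum_k \dim\mathcal{C}_G(k)\, q^k$ can be shown to satisfy the deletion-contraction recurrence $H_G(q) = H_{G/e}(q) + q\, H_{G\setminus e}(q)$ for any non-loop, non-bridge edge $e$, with matching boundary behaviour at loops and bridges; these are precisely the recursions characterising the generating function $q^{e(G)-n}\, T_G(1, 1+q^{-1})$, whose coefficient of $q^k$ counts spanning trees with external activity $e(G)-n-k$. The main obstacle is establishing injectivity of $\mathcal{B}_G \to \mathcal{C}_G$, i.e., ruling out hidden relations among products $X_{i_1}\cdots X_{i_k}$ beyond those forced by $\mathcal{J}$; the cleanest route is the deformation/dimension argument above, but verifying it in a grading-preserving way requires care, since one must track that the associated graded with respect to the chosen term order still matches $\mathcal{A}_G$ degree-by-degree, and that the non-slim vanishing in $\Phi_G$ is fully captured by the power-sum relations and not by any strictly finer identities in the subalgebra $\mathcal{C}_G$.
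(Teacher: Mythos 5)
First, a point of comparison: the paper does not prove this statement at all --- it is quoted verbatim as a known theorem of Postnikov and Shapiro from \cite{PS}, so there is no in-paper argument to measure you against. Judged as a reconstruction of the original proof, your outline has the right architecture (the chain $\mathcal{A}_{G}$, $\mathcal{B}_{G}$, $\mathcal{C}_{G}$, the surjection $\mathcal{B}_{G}\twoheadrightarrow\mathcal{C}_{G}$ via $x_{i}\mapsto X_{i}$, and the identification of the standard monomials of $\mathcal{A}_{G}$ with $G$-parking functions), and your verification that $p_{I}\mapsto 0$ is essentially correct: the interior edges of $I$ cancel in $\sum_{i\in I}X_{i}$, leaving only the $D_{I}$ crossing edges, so the $(D_{I}+1)$-st power dies by $\phi_{e}^{2}=0$ (you do not even need the non-slim relation for this step).

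The genuine gap is in how you close the loop. Your plan is to prove $\dim\mathcal{B}_{G}=\dim\mathcal{A}_{G}=|ST(G)|$ by a deformation argument and then ``force injectivity by dimension comparison.'' But the surjection $\mathcal{B}_{G}\twoheadrightarrow\mathcal{C}_{G}$ together with $\dim\mathcal{B}_{G}=|ST(G)|$ only yields the upper bound $\dim\mathcal{C}_{G}\leqslant|ST(G)|$; nothing in your argument prevents $\mathcal{C}_{G}$ from being strictly smaller. Moreover, the deformation step itself only gives an inequality honestly: showing that the monomials $x^{b}$ with $b$ a $G$-parking function span $\mathcal{B}_{G}$ yields $\dim\mathcal{B}_{G}(k)\leqslant\dim\mathcal{A}_{G}(k)$, and the monomial ideal $\mathcal{I}$ is not literally an initial ideal of $\mathcal{J}$ (the generators $m_{I}$ and $p_{I}$ do not even have the same degree), so equality cannot be read off from a flat degeneration. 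The missing ingredient --- and the heart of the Postnikov--Shapiro proof --- is an independent \emph{lower} bound $\dim\mathcal{C}_{G}(k)\geqslant\#\{T\in ST(G):\mathbf{ext}(T)=e(G)-n-k\}$, obtained by exhibiting, for each spanning tree $T$, an explicit monomial in the $X_{i}$ and proving linear independence of these elements by a triangularity argument in the basis $\{\phi_{H}: H \text{ slim}\}$ of $\Phi_{G}$. Only with that lower bound in hand do the two inequalities squeeze to equalities, simultaneously giving $\dim\mathcal{B}_{G}=\dim\mathcal{A}_{G}$, the injectivity you flagged as the obstacle, and the graded refinement. Your final paragraph asserts a deletion--contraction recurrence for $H_{\mathcal{C}_{G}}$ without deriving it; as stated it replaces the hard step rather than performing it (and the specialization should be $T_{G}(1,q^{-1})$, not $T_{G}(1,1+q^{-1})$).
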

From the above theorem we have the following result regarding Hilbert polynomial of $\mathcal{C}_{G}$ as a specialization of the Tutte polynomial of $G$.

\begin{corollary}
    Let $G$ be a graph, the Hilbert polynomial  $\mathcal{H}_{\mathcal{C}_{G}}(s)$ of the algebra $\mathcal{C}_{G}$ is given by

    $$ \mathcal{H}_{\mathcal{C}_{G}}(s) = T_{G}(1+s,s^{-1}).s^{e(G)-v(G)+c(G)} ,  $$
where $e(G) $ and $v(G)$ denote the number of edges and vertices of $G$ respectively and $c(G)$ denote the number of connected components.
\end{corollary}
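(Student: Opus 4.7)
The plan is to derive the corollary directly from the preceding Postnikov--Shapiro theorem by translating its graded-dimension statement into a generating-function identity. By definition,
$$\mathcal{H}_{\mathcal{C}_G}(s) = \sum_{k \geq 0} \dim_{\mathbb{K}} \mathcal{C}_G(k)\, s^k,$$
so the whole task reduces to evaluating the right-hand side using the combinatorial description of $\dim_{\mathbb{K}} \mathcal{C}_G(k)$ in terms of spanning trees of $G$ weighted by their external (and, as we shall see, internal) activity.

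First I would use the previous theorem to rewrite $\dim_{\mathbb{K}} \mathcal{C}_G(k)$ as a sum over spanning trees $T \in ST(G)$, with each tree contributing a binomial coefficient $\binom{\text{int}(T)}{j}$ at the shift $j$ determined by $k$, $\text{ext}(T)$, and the corank $e(G) - v(G) + c(G)$. After substituting this refined count into the Hilbert series and interchanging the order of summation, the identity
$$\sum_{j \geq 0} \binom{\text{int}(T)}{j}\, s^j = (1+s)^{\text{int}(T)}$$
collapses the inner sum for each fixed $T$, producing
$$\mathcal{H}_{\mathcal{C}_G}(s) = s^{\,e(G) - v(G) + c(G)} \sum_{T \in ST(G)} (1+s)^{\text{int}(T)} \, (s^{-1})^{\text{ext}(T)}.$$
The final step is then immediate: the definition $T_G(x,y) = \sum_{T \in ST(G)} x^{\text{int}(T)} y^{\text{ext}(T)}$ identifies the remaining sum as $T_G(1+s,\, s^{-1})$, yielding the asserted formula.

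The main obstacle I anticipate is honest bookkeeping at the first step. As stated, the previous theorem only records external activity, whereas the Tutte specialisation $T_G(1+s, s^{-1})$ requires that the internal activity $\text{int}(T)$ enter as a binomial weight via $(1+s)^{\text{int}(T)}$. One must either lift the theorem to a finer statement by re-examining its proof (tracking each internally active edge's contribution to the graded pieces), or invoke a deletion--contraction argument for $\mathcal{C}_G$ that mirrors the Tutte polynomial recursion, so that induction on $e(G)$ closes the identity. Checking that the exponent $e(G)-v(G)+c(G)$ is preserved under both deletion and contraction (it decreases by one in the non-loop, non-bridge case consistently with the Tutte recursion) is the delicate bookkeeping that makes the induction work.
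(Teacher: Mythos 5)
Your instinct that something does not add up here is correct, but your proposed repair is where the argument breaks. The paper offers no written proof: the corollary is meant to follow by direct substitution from the preceding theorem, namely
$$\mathcal{H}_{\mathcal{C}_G}(s)=\sum_{k}\dim\mathcal{C}_G(k)\,s^{k}=\sum_{T\in ST(G)}s^{\,e(G)-n-\mathbf{ext}(T)}=s^{\,e(G)-v(G)+c(G)}\;T_G(1,s^{-1}),$$
using $n=v(G)-1$ for connected $G$; internal activity never enters. The binomial refinement you postulate, with $\dim\mathcal{C}_G(k)$ expanded as $\sum_{T}\binom{\mathbf{int}(T)}{j}$ at the appropriate shift, is not a consequence of the theorem and is in fact false: summing it over all $k$ (equivalently, setting $s=1$ in your formula) gives $\sum_{T}2^{\mathbf{int}(T)}=T_G(2,1)$, the number of spanning forests of $G$, whereas the first sentence of the theorem says $\dim\mathcal{C}_G=T_G(1,1)$, the number of spanning trees. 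Since the minimal edge in the ordering is internally active in every spanning tree containing it, $T_G(2,1)>T_G(1,1)$ for any graph with at least one edge, so your refined statement contradicts the very theorem you are invoking. A deletion--contraction induction cannot rescue this either, because the identity being proved already fails at $s=1$.

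The honest conclusion is that the corollary as printed cannot be derived from the stated theorem: the first argument of the Tutte polynomial should be $1$, not $1+s$ (this matches the Postnikov--Shapiro source, where the Hilbert series of $\mathcal{C}_G$ is $s^{e(G)-v(G)+c(G)}T_G(1,s^{-1})$; specialisations of the form $T_G(1+s,\,\cdot\,)$ belong to the forest-counting algebras, not to $\mathcal{C}_G$). Your proposal, as written, manufactures the factor $(1+s)^{\mathbf{int}(T)}$ out of a refinement that does not hold, so it does not establish the statement; it does, however, correctly locate the exact point at which the claimed formula and the theorem part ways.
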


  In this paper, we define the space $\mathcal{S}_{G}$ \cite{PS} for a tiered graph $G$. It is proved in (\cite{DGGE19}, Theorem 2.9 ) that the externally active edges in a complete tiered graph $G$ with $T$ as a spannig tree, are the edges $e_{r,j}$ where the vertex $r$ is the lowest label vertex according to the ordering $1 <2<3< \dots<n$ and $j \in G$ with $r < j  $ and $t(r) < t(j)$. Moreover, the vertex $j$ is adjacent to vertex $i$ in $T$ which was connected to $r$ before deleting it such that $j < i$ and $\textbf{t}(j)< \textbf{t}(i) $.  The space $\mathcal{S}_{G}$ in $\mathbb{K}[z_{1}, \dots, z_{n}]$ associated with a tiered graph $G$ is spanned by the elements 

$$   Z_{H}= \prod_{e \in G}   z_{e}, $$ where $z_{e} = z_{i}- z_{j}$ with $i < j$ and $\textbf{t}(i)< \textbf{t}(j)$. Let $\textbf{act}(G)$ be the set of externally active edges of $G$. We denote $T_{\textbf{act}}$ be a tiered graph obtained from a tiered tree by joining all the external active edges. 
\begin{figure}
    \centering
\tikzset{every picture/.style={line width=0.15pt}} 
\begin{tikzpicture}[x=0.45pt,y=0.45pt,yscale=-1,xscale=1]

\draw [color={rgb, 255:red, 144; green, 19; blue, 254 }  ,draw opacity=1 ][fill={rgb, 255:red, 144; green, 19; blue, 254 }  ,fill opacity=1 ][line width=0.75]  [dash pattern={on 0.84pt off 2.51pt}]  (113.79,44.66) -- (261.43,44.06) -- (522,43) ;
\draw [color={rgb, 255:red, 144; green, 19; blue, 254 }  ,draw opacity=1 ][fill={rgb, 255:red, 144; green, 19; blue, 254 }  ,fill opacity=1 ][line width=0.75]  [dash pattern={on 0.84pt off 2.51pt}]  (109.45,102.98) -- (257.09,102.38) -- (517.66,101.31) ;
\draw [color={rgb, 255:red, 144; green, 19; blue, 254 }  ,draw opacity=1 ][fill={rgb, 255:red, 144; green, 19; blue, 254 }  ,fill opacity=1 ][line width=0.75]  [dash pattern={on 0.84pt off 2.51pt}]  (113.79,157.97) -- (261.43,157.36) -- (522,156.3) ;
\draw [color={rgb, 255:red, 144; green, 19; blue, 254 }  ,draw opacity=1 ][fill={rgb, 255:red, 144; green, 19; blue, 254 }  ,fill opacity=1 ][line width=0.75]  [dash pattern={on 0.84pt off 2.51pt}]  (108,209.62) -- (255.64,209.02) -- (516.21,207.95) ;
\draw  [fill={rgb, 255:red, 0; green, 0; blue, 0 }  ,fill opacity=1 ][line width=0.75]  (131.16,42) .. controls (131.16,35.55) and (135.7,30.33) .. (141.29,30.33) .. controls (146.89,30.33) and (151.43,35.55) .. (151.43,42) .. controls (151.43,48.44) and (146.89,53.66) .. (141.29,53.66) .. controls (135.7,53.66) and (131.16,48.44) .. (131.16,42) -- cycle ;
\draw  [fill={rgb, 255:red, 0; green, 0; blue, 0 }  ,fill opacity=1 ][line width=0.75]  (187.61,44.36) .. controls (187.61,37.92) and (192.15,32.7) .. (197.74,32.7) .. controls (203.34,32.7) and (207.88,37.92) .. (207.88,44.36) .. controls (207.88,50.8) and (203.34,56.02) .. (197.74,56.02) .. controls (192.15,56.02) and (187.61,50.8) .. (187.61,44.36) -- cycle ;
\draw  [fill={rgb, 255:red, 0; green, 0; blue, 0 }  ,fill opacity=1 ][line width=0.75]  (429.71,42) .. controls (429.71,35.55) and (434.24,30.33) .. (439.84,30.33) .. controls (445.44,30.33) and (449.97,35.55) .. (449.97,42) .. controls (449.97,48.44) and (445.44,53.66) .. (439.84,53.66) .. controls (434.24,53.66) and (429.71,48.44) .. (429.71,42) -- cycle ;
\draw  [fill={rgb, 255:red, 0; green, 0; blue, 0 }  ,fill opacity=1 ][line width=0.75]  (245.52,98.65) .. controls (245.52,92.21) and (250.05,86.98) .. (255.65,86.98) .. controls (261.25,86.98) and (265.78,92.21) .. (265.78,98.65) .. controls (265.78,105.09) and (261.25,110.31) .. (255.65,110.31) .. controls (250.05,110.31) and (245.52,105.09) .. (245.52,98.65) -- cycle ;
\draw  [fill={rgb, 255:red, 0; green, 0; blue, 0 }  ,fill opacity=1 ][line width=0.75]  (331.87,101.6) .. controls (331.87,95.16) and (336.4,89.94) .. (342,89.94) .. controls (347.6,89.94) and (352.13,95.16) .. (352.13,101.6) .. controls (352.13,108.04) and (347.6,113.26) .. (342,113.26) .. controls (336.4,113.26) and (331.87,108.04) .. (331.87,101.6) -- cycle ;
\draw  [fill={rgb, 255:red, 0; green, 0; blue, 0 }  ,fill opacity=1 ][line width=0.75]  (428.87,155.26) .. controls (428.87,148.82) and (433.4,143.6) .. (439,143.6) .. controls (444.6,143.6) and (449.13,148.82) .. (449.13,155.26) .. controls (449.13,161.71) and (444.6,166.93) .. (439,166.93) .. controls (433.4,166.93) and (428.87,161.71) .. (428.87,155.26) -- cycle ;
\draw  [fill={rgb, 255:red, 0; green, 0; blue, 0 }  ,fill opacity=1 ][line width=0.75]  (323.69,155.3) .. controls (323.69,148.86) and (328.22,143.64) .. (333.82,143.64) .. controls (339.41,143.64) and (343.95,148.86) .. (343.95,155.3) .. controls (343.95,161.74) and (339.41,166.96) .. (333.82,166.96) .. controls (328.22,166.96) and (323.69,161.74) .. (323.69,155.3) -- cycle ;
\draw  [fill={rgb, 255:red, 0; green, 0; blue, 0 }  ,fill opacity=1 ][line width=0.75]  (186.16,154.33) .. controls (186.16,147.89) and (190.7,142.67) .. (196.3,142.67) .. controls (201.89,142.67) and (206.43,147.89) .. (206.43,154.33) .. controls (206.43,160.77) and (201.89,166) .. (196.3,166) .. controls (190.7,166) and (186.16,160.77) .. (186.16,154.33) -- cycle ;
\draw  [fill={rgb, 255:red, 0; green, 0; blue, 0 }  ,fill opacity=1 ][line width=0.75]  (474.87,206.6) .. controls (474.87,200.16) and (479.4,194.94) .. (485,194.94) .. controls (490.6,194.94) and (495.13,200.16) .. (495.13,206.6) .. controls (495.13,213.04) and (490.6,218.26) .. (485,218.26) .. controls (479.4,218.26) and (474.87,213.04) .. (474.87,206.6) -- cycle ;
\draw  [fill={rgb, 255:red, 0; green, 0; blue, 0 }  ,fill opacity=1 ][line width=0.75]  (385.93,208.48) .. controls (385.93,202.04) and (390.46,196.82) .. (396.06,196.82) .. controls (401.66,196.82) and (406.19,202.04) .. (406.19,208.48) .. controls (406.19,214.93) and (401.66,220.15) .. (396.06,220.15) .. controls (390.46,220.15) and (385.93,214.93) .. (385.93,208.48) -- cycle ;
\draw  [fill={rgb, 255:red, 0; green, 0; blue, 0 }  ,fill opacity=1 ][line width=0.75]  (235.38,209.32) .. controls (235.38,202.88) and (239.92,197.65) .. (245.51,197.65) .. controls (251.11,197.65) and (255.65,202.88) .. (255.65,209.32) .. controls (255.65,215.76) and (251.11,220.98) .. (245.51,220.98) .. controls (239.92,220.98) and (235.38,215.76) .. (235.38,209.32) -- cycle ;
\draw  [fill={rgb, 255:red, 0; green, 0; blue, 0 }  ,fill opacity=1 ][line width=0.75]  (129.71,210.98) .. controls (129.71,204.54) and (134.25,199.32) .. (139.84,199.32) .. controls (145.44,199.32) and (149.98,204.54) .. (149.98,210.98) .. controls (149.98,217.43) and (145.44,222.65) .. (139.84,222.65) .. controls (134.25,222.65) and (129.71,217.43) .. (129.71,210.98) -- cycle ;
\draw [line width=0.75]    (141.29,42) -- (139.84,210.98) ;
\draw [line width=0.75]    (342,101.6) -- (333.82,155.3) ;
\draw [line width=0.75]    (342,101.6) -- (250,202.6) ;
\draw    (197.74,56.02) -- (196.3,154.33) ;
\draw  [dash pattern={on 4.5pt off 4.5pt}]  (257.09,102.38) -- (234.25,121.9) -- (196.3,154.33) ;
\draw    (197.74,44.36) -- (254,92.6) ;
\draw  [dash pattern={on 4.5pt off 4.5pt}]  (333.82,155.3) -- (297,180.6) -- (255.65,209.32) ;
\draw    (439.84,42) -- (439,143.6) ;
\draw    (439,155.26) -- (402,200.6) ;
\draw    (440,159.32) -- (486,203.6) ;
\draw  [dash pattern={on 4.5pt off 4.5pt}]  (433.84,39) -- (396.06,208.48) ;
\draw  [dash pattern={on 4.5pt off 4.5pt}]  (443.97,39) -- (491,200.6) ;
\draw  [fill={rgb, 255:red, 0; green, 0; blue, 0 }  ,fill opacity=1 ][line width=0.75]  (310.38,206.32) .. controls (310.38,199.88) and (314.92,194.65) .. (320.51,194.65) .. controls (326.11,194.65) and (330.65,199.88) .. (330.65,206.32) .. controls (330.65,212.76) and (326.11,217.98) .. (320.51,217.98) .. controls (314.92,217.98) and (310.38,212.76) .. (310.38,206.32) -- cycle ;

\draw (109.88,206.07) node [anchor=north west][inner sep=0.75pt]  [font=\Large,rotate=-359.41] [align=left] {2};
\draw (368.88,207.07) node [anchor=north west][inner sep=0.75pt]  [font=\Large,rotate=-359.41] [align=left] {6};
\draw (214.88,206.07) node [anchor=north west][inner sep=0.75pt]  [font=\Large,rotate=-359.41] [align=left] {7};
\draw (442.88,120.07) node [anchor=north west][inner sep=0.75pt]  [font=\Large,rotate=-359.41] [align=left] {C};
\draw (488.86,210.98) node [anchor=north west][inner sep=0.75pt]  [font=\Large,rotate=-359.41] [align=left] {5};
\draw (343.95,155.3) node [anchor=north west][inner sep=0.75pt]  [font=\Large,rotate=-359.41] [align=left] {8};
\draw (170.88,160.07) node [anchor=north west][inner sep=0.75pt]  [font=\Large,rotate=-359.41] [align=left] {4};
\draw (266.88,67.07) node [anchor=north west][inner sep=0.75pt]  [font=\Large,rotate=-359.41] [align=left] {A};
\draw (351.88,78.07) node [anchor=north west][inner sep=0.75pt]  [font=\Large,rotate=-359.41] [align=left] {9};
\draw (248.88,87.07) node [anchor=north west][inner sep=0.75pt]  [font=\Large,rotate=-359.41] [align=left] {9};
\draw (248.88,87.07) node [anchor=north west][inner sep=0.75pt]  [font=\Large,rotate=-359.41] [align=left] {9};
\draw (295.88,204.07) node [anchor=north west][inner sep=0.75pt]  [font=\Large,rotate=-359.41] [align=left] {1};
\draw (454.88,10.07) node [anchor=north west][inner sep=0.75pt]  [font=\Large,rotate=-359.41] [align=left] {D};
\draw (204.88,15.07) node [anchor=north west][inner sep=0.75pt]  [font=\Large,rotate=-359.41] [align=left] {B};
\draw (111.88,14.07) node [anchor=north west][inner sep=0.75pt]  [font=\Large,rotate=-359.41] [align=left] {3};
\draw (110,135) node [anchor=north west][inner sep=0.75pt]  [color={rgb, 255:red, 74; green, 144; blue, 226 }  ,opacity=1 ]  {$T_{1}$};
\draw (162,89) node [anchor=north west][inner sep=0.75pt]  [color={rgb, 255:red, 74; green, 144; blue, 226 }  ,opacity=1 ]  {$T_{2}$};
\draw (270,121) node [anchor=north west][inner sep=0.75pt]  [color={rgb, 255:red, 74; green, 144; blue, 226 }  ,opacity=1 ]  {$T_{3}$};
\draw (375,66) node [anchor=north west][inner sep=0.75pt]  [color={rgb, 255:red, 74; green, 144; blue, 226 }  ,opacity=1 ]  {$ \begin{array}{l}
T_{4}\\
\end{array}$};
\end{tikzpicture}
\caption{A spanning forest for the tiered tree of Figure~\ref{fig1}}\label{fig5}
\end{figure}
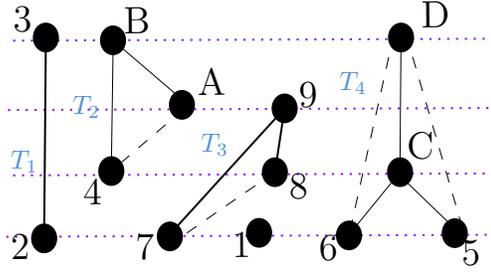

\begin{lemma}
    The elements $Z_{G \setminus T_{\textbf{act}}}$ where $G\setminus T_{\textbf{act}}$ is the graph obtained by removing the subgraph $T_{\textbf{act}}$ from $G$, spans the space $\mathcal{S}_{G}.$

\end{lemma}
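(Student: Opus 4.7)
The plan is to reduce any element of $\mathcal{S}_G$ to a linear combination of the claimed spanning elements, using the cycle relations among the $z_e$. Since $z_e = z_i - z_j$ for the endpoints $i < j$ of $e$, every cycle $C$ in $G$ yields an identity $\sum_{e \in C} \varepsilon_e z_e = 0$ with signs $\varepsilon_e \in \{\pm 1\}$ determined by cycle orientation. Multiplying this identity by $\prod_{f \in S} z_f$ for any $S \subseteq E(G)$ disjoint from $C$ produces a polynomial identity among the $Z_H$'s, and this allows us to rewrite $Z_H$ whenever $H$ contains an edge $e_0$ which is the $\prec$-minimum of some cycle of $G$ disjoint from $H \setminus \{e_0\}$.

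First I would fix the edge total order $\prec$ used in the definition of external activity, and equip the collection of subgraphs of $G$ with the induced lexicographic order on characteristic vectors. Next, starting from an arbitrary generator $Z_H$ of $\mathcal{S}_G$, if $H^{c} := G \setminus H$ is not already of the form $T_{\textbf{act}}$ for some spanning tree $T$, I would exhibit a cycle $C \subseteq G$ whose $\prec$-minimum edge $e_0$ lies in $H$, and apply the corresponding cycle relation to rewrite $Z_H$ as a $\mathbb{K}$-linear combination of $Z_{H'}$ with $H' \succ H$ in the lexicographic order (each $H'$ is obtained from $H$ by replacing $e_0$ with some other edge of $C$). Since the order is well-founded, the reduction terminates, and the last step is to identify the irreducible $H$: these are precisely the subgraphs containing no broken circuit with respect to $\prec$, which by the description recalled just before the lemma is equivalent to $H^{c} = T \cup \textbf{act}(T) = T_{\textbf{act}}$ for some spanning tree $T$.

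The main obstacle will be verifying that the irreducible stratum under this reduction coincides exactly with the family of complements of the $T_{\textbf{act}}$'s. This is the tiered-graph analogue of the classical broken-circuit theorem, and the key input is Theorem 2.9 of \cite{DGGE19}, which asserts that an edge $e = (r,j) \notin T$ is externally active with respect to $T$ precisely when $r$ is the lowest-label vertex in the fundamental cycle determined by $T$ and $e$, which is exactly the condition that $e$ be $\prec$-minimum in that cycle. Once this correspondence is established, irreducibility under the reduction translates into the decomposition $H^{c} = T \cup \textbf{act}(T)$ for some spanning tree $T$, and the spanning claim for $\mathcal{S}_G$ follows immediately.
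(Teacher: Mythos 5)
Your proposal is correct in outline and runs on the same algebraic engine as the paper's argument: the cycle relation $\sum_{e\in C}\varepsilon_e z_e=0$, applied to a cycle $C$ meeting the subgraph $H$ in exactly one edge that is $\prec$-minimal in $C$, which lets you trade $Z_H$ for a combination of $Z_{H'}$ obtained by edge exchanges inside $C$. Where you differ is in the organization. The paper argues by contradiction from a \emph{maximal} slim subgraph $H_k$ with $Z_{H_k}$ outside the span, manufacturing the cycle through an auxiliary spanning-forest decomposition into slim pieces $H_1,\dots,H_l$; you instead run a well-founded straightening, which makes termination explicit (each exchange replaces the $\prec$-minimum of $C$ by a strictly larger edge, a strict move in a finite order on subgraphs of fixed size) and forces you to confront the base case head-on: identifying the irreducible subgraphs with the complements $G\setminus T_{\textbf{act}}$. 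That identification is the genuine content of the lemma, and you are right to single it out as the main obstacle; be aware, however, that Theorem 2.9 of \cite{DGGE19} only tells you which non-tree edges are externally active for a \emph{given} spanning tree, so you still owe the converse direction, namely that every slim $H$ admitting no reduction (including one properly contained in some $G\setminus T_{\textbf{act}}$) is itself the complement of $T'\cup\textbf{act}(T')$ for a suitable spanning tree $T'$. The paper's proof does not supply this step either --- it simply asserts the final contradiction --- so your version is at least as complete as the original and considerably easier to audit; filling in the broken-circuit correspondence carefully would improve both.
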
 

\begin{proof}
We prove the lemma by contradiction. Let $Z_{G \setminus T_{\textbf{act}}}$ do not span $\mathcal{S}_{G}$. Then there exists a maximal slim subgraph $H_{k}$ such that $Z_{H_{k}}$ can not be expressed as a linear combinations of the elements of $Z_{G\setminus T_{\textbf{act}}}$. Let $\mathcal{F}$ be the spannig forest of $G$. Let $T_{1}, \dots, T_{l}$ be the disjoint components of $\mathcal{F}$ (see Figure ~\ref{fig5}).  
 For each pair of vertices $(j,k) \in T_{j}$ with $j< k$ and $\textbf{t}(j) < \textbf{t}(k)$, we connect those vertices by an edge in $T_{j}$ for $j= 1,2, \dots, l$. Therefore, there is a sequence of subgraphs $H_{1}, \dots, H_{l}$ with ${T_{j}}$ as spannig trees for $j=1,2, \dots, l$. For each components $H_{j}$ there is an edge between a vertex of $H_{j}$ and the lowest labelled vertex $r$. Therefore after deleting any component $H_{j}$, there exist a path between every pair of vertices in the resulting graph. Hence $G \setminus H_{j}$ is connected. Thus each subgraphs $H_{j}$ are slim subgraphs of $G$. Now the edges of $G$ are ordered lexicographically, we  choose a maximal slim subgraph $H_{k}$. Then there exists a unique cycle $C=(e_{1}, \dots e_{r}, \dots, e_{l})$ in $G$ such that $H_{k} \cap C= \{e_{r}\}$. Since $r$ is minimal as well as the root of the graph $G$, we have $z_{e_{r}}$ is a linear combinations of $z_{e_{1}}, \dots, z_{e_{l}}$. For each slim subgraph $H_{j}$ there exists a path in the graph $G$ such that which contains vertices from each $H_{j}$s and the vertex $r$. Thus $Z_{H_{k}}$ is a linear combination of $Z_{H_{1}}, \dots  Z_{H_{l}}$ which leads to a contradiction. 
\end{proof}
\begin{Example} 
In Figure \ref{fig5}, the forest is consist of disjoint union of tiered trees $T_{1},\dots T_{4}$ (black edges) and the root $1$. The corresponding slim subgraphs are $H_{1}, \dots, H_{4}$ (after joining the compatible edges, dotted lines). After deleting any component $H_{i}$ from the complete tiered graph $G$, there exists a path which contains vertices of $G \setminus H_{i}$ and the vertex $1$ (as a common vertex). For instance, $P=24BA7981$ is a path after removing the component $H_{4}$. Therefore $G \setminus H_{i}$ is connected. Since the edges are lexicographically ordered by the end points, we consider $H_{4}$ as maximal slim subgraph of $G$. The vertex $1$ is adjacent to a vertex of the components $H_{1}, \dots H_{4}$. Therefore, $z_{1}= z_{D}+z_{A}+z_{B}+ \dots + z_{3}$.
\end{Example}
We have the following result regarding dimension of the algebra $\mathcal{C}_{G}$ where $G$ is a complete tiered graph in terms of the space $\mathcal{S}_{G}$
\begin{lemma}
    Let $G$ be a complete tiered graph then $\text{dim} 
     (\mathcal{C}^{k}_{G}) = \text{dim}(\mathcal{S}^{k}_{G})$.
\end{lemma}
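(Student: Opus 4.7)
The plan is to match both graded dimensions via the distribution of the external activity statistic over the spanning trees of the complete tiered graph $G$, using the preceding lemma for the $\mathcal{S}_G$ side and the Postnikov--Shapiro theorem for $\mathcal{C}_G$.

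By the preceding lemma, $\mathcal{S}_G$ is spanned by $\{Z_{G\setminus T_{\mathbf{act}}} : T \in ST(G)\}$. Each element $Z_{G\setminus T_{\mathbf{act}}}$ is a product of the linear forms $z_e = z_i - z_j$ indexed by the edges of $G\setminus T_{\mathbf{act}}$, of which there are $e(G) - (n-1) - \mathbf{ext}(T)$, so it lies in polynomial degree $e(G) - n + 1 - \mathbf{ext}(T)$. Consequently $\dim \mathcal{S}_G^k$ is bounded above by the number of spanning trees with external activity $e(G) - n + 1 - k$. On the other side, the Postnikov--Shapiro theorem cited above expresses $\dim \mathcal{C}_G^k$ as the number of spanning trees with external activity $e(G) - n - k$. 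After aligning the natural shift between the two gradings (the linear generators $X_i$ of $\mathcal{C}_G$ are indexed by vertices while the linear factors $z_e$ of $\mathcal{S}_G$ are indexed by edges), these distributions agree graded component by graded component, and both total $|ST(G)|$.

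The main obstacle is to promote the spanning set $\{Z_{G\setminus T_{\mathbf{act}}}\}_T$ to a basis by establishing linear independence in $\mathbb K[z_1,\ldots,z_n]$. My approach is to fix the lexicographic edge order already used to define external activity, observe that the assignment $T \mapsto G\setminus T_{\mathbf{act}}$ is injective (one recovers $T$ from its complement via the cut/cycle characterization of internal and external activity in reverse), and then argue that distinct squarefree products of the linear forms $z_i - z_j$ are linearly independent in the polynomial ring via a leading-term argument under a generic monomial order on $z_1,\ldots,z_n$. The complete tiered hypothesis enters at two places: it supplies the slim subgraphs exploited by the preceding lemma, and via the condition $i<j$ and $\mathbf{t}(i)<\mathbf{t}(j)$ it forces the $z_e$ to have consistent orientation and leading monomials so that products over distinct edge sets remain independent. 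Once this is secured, the basis distributes across $\mathcal{S}_G^k$ by the external activity statistic exactly as the Postnikov--Shapiro theorem distributes the basis of $\mathcal{C}_G^k$, yielding $\dim\mathcal{C}_G^k = \dim\mathcal{S}_G^k$ for every $k$.
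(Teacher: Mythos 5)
Your route is genuinely different from the paper's: the paper proves this lemma by defining a ring homomorphism $\beta\colon \mathbb K[x_1,\dots,x_n]\to\mathbb K[z_1,\dots,z_n]$, $\beta(x_i)=z_i$, and arguing via a matrix-rank comparison that a basis of $\mathcal C_G$ is carried to a basis of $\mathcal S_G$; it never invokes the external-activity count at all. Your plan --- combine the preceding spanning lemma for $\mathcal S_G$ with the Postnikov--Shapiro count $\dim\mathcal C_G(k)=\#\{T: \mathbf{ext}(T)=e(G)-n-k\}$ and match the two gradings tree by tree --- is a reasonable strategy in principle (and the grading ``shift'' you worry about disappears once you note that $n$ in the Postnikov--Shapiro statement counts non-root vertices, so both sides put $Z_{G\setminus T_{\mathbf{act}}}$ and the trees with $\mathbf{ext}(T)=e(G)-v(G)+1-k$ in the same component $k$). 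But as written it only yields the inequality $\dim\mathcal S_G^k\le\dim\mathcal C_G^k$, because the spanning lemma gives an upper bound and equality requires the spanning set to be a basis.

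That linear independence is exactly the step you defer, and the mechanism you propose for it would fail. Distinct squarefree products of the forms $z_i-z_j$ are \emph{not} linearly independent in general: the forms $z_e$ span only an $(n-1)$-dimensional space, so the degree-$k$ component they generate has dimension $\binom{n-2+k}{k}$, while the number of $k$-subsets of edges (already for $K_4$ in degree $2$: fifteen products in a six-dimensional space) vastly exceeds it. Nor does a leading-term argument under a generic monomial order separate them: $(z_1-z_2)(z_1-z_3)$ and $(z_1-z_2)(z_1-z_4)$ share the leading monomial $z_1^2$ under any order with $z_1$ largest. So independence cannot come from genericity or from the ``consistent orientation'' $i<j$, $\mathbf t(i)<\mathbf t(j)$; it has to come from the specific combinatorial structure of the complements $G\setminus T_{\mathbf{act}}$ (in Postnikov--Shapiro this is where the real work lies, done via a pairing with the dual $\mathcal P$-space or a deletion--contraction induction). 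Until that step is supplied, the proof establishes only one inequality of dimensions, not the claimed equality.
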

\begin{proof}
 It is clear that the monomials $\phi_{H}= \prod_{e \in H} \phi_{e}$, where $H$ ranges over all slim subgraphs of a complete tiered graph $G$ form a linear basis of $\Phi_{G}$. Let us define a ring homomorphism $\beta: \mathbb{K}[x_{1}, \dots, x_{n}] \mapsto \mathbb{K}[z_{1}, \dots, z_{n}]$ by $\beta(x_{i})=z_{i}$. We need to show if $(b_{1}, \dots, b_{n})$ is a basis of $\mathcal{C}_{G}$ then $(\beta(b_{1}), \dots, \beta(b_{n}))$ is a basis of $\mathcal{S}_{G}$. Let $B_{H}$, where $H$ is slim subgraph of $G$, is the matrix form of $(b_{1}, \dots, b_{n})$. Since $H$ is a tiered graph and the map $\beta$ induce a graph homomorphism $\beta^{\prime}: H \mapsto H$. Therefore, $B_{H}$ is a matrix form of 
 $(\beta(b_{1}), \dots, \beta(b_{n}))$. Hence $dim(\mathcal{C}^{k}_{G})$ is equal to the rank of the matrix $B_{H}$, which is also equal to $dim( \mathcal{S}^{k}_{G})$. 
\end{proof}

In \cite{DGGE19}, the authors defined the statistic weight $\textbf{wt}(T) $ on labelled trees and showed that weight of a tiered tree is equal to the external activity of its complete tired graph. It would be a interesting problem to define an algebra on tiered graph (or simply connected graphs) which enumerates such weights of tiered trees (or labelled trees).

 \subsection{Dual graphs} 
Let $G$ be a connected tiered graph with tiereing function $\textbf{t}$. Dong and Yan \cite{DY} defined the dual graph $G^{*}$ of tiered graph $G$ with two tiers. In this section we extended the definition of dual graphs for higher tier. 
\begin{Def}
Let $G$ be a connected tiered graph with $m$ tiers, where $m \geqslant 2$. The dual graph $G^{*}$ of $G$ is the graph with $V(G^{*})= V(G)$ and $\textbf{t}(i)=(m+1)- \textbf{t}(n+1-i)$. Moreover, for $(i,j) \in E(G^{*})$ if and only if $(n+1-i, n+1-j) \in E(G)$.  
\end{Def}
\begin{lemma}
   Let $G$ be a connected tiered graph and $G^{*}$ be the dual graph of $G$. Then $G \cong G^{*}$. 
\end{lemma}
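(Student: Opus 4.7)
The plan is to exhibit an explicit isomorphism $\phi: G \to G^*$ given by the \emph{label reversal}
\[
\phi(i) = n+1 - i
\]
on the vertex set $V(G) = V(G^*) = \{1, 2, \dots, n\}$. Clearly $\phi$ is an involutive bijection of vertices, so the only work is to check that $\phi$ carries the edge set and tiering structure of $G$ onto those of $G^*$.

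First, I would verify the edge condition. By definition, $(i,j) \in E(G^*)$ iff $(n+1-i, n+1-j) \in E(G)$. Hence $(i,j) \in E(G)$ iff $(n+1-i, n+1-j) = (\phi(i), \phi(j)) \in E(G^*)$, which is exactly the statement that $\phi$ is a graph isomorphism between the underlying simple graphs. Next, I would verify the interaction with the tiering functions: by definition of $\mathbf{t}^*$,
\[
\mathbf{t}^*(\phi(i)) \;=\; \mathbf{t}^*(n+1-i) \;=\; (m+1) - \mathbf{t}(n+1-(n+1-i)) \;=\; (m+1) - \mathbf{t}(i),
\]
so $\phi$ sends tier $k$ of $G$ bijectively onto tier $m+1-k$ of $G^*$. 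Combined with the order-reversing bijection $k \mapsto m+1-k$ on the tier set $[m]$, this gives an isomorphism of tiered graphs.

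Before concluding, I would sanity-check that $G^*$ is in fact a tiered graph, since the definition must be well-posed for the statement to make sense. Suppose $(i,j) \in E(G^*)$ with $i < j$; I must show $\mathbf{t}^*(i) < \mathbf{t}^*(j)$. Pulling back via $\phi$, we get $(n+1-i, n+1-j) \in E(G)$ with $n+1-j < n+1-i$, so the tiered condition on $G$ yields $\mathbf{t}(n+1-j) < \mathbf{t}(n+1-i)$. Subtracting from $m+1$ reverses the inequality: $\mathbf{t}^*(i) = (m+1) - \mathbf{t}(n+1-i) < (m+1) - \mathbf{t}(n+1-j) = \mathbf{t}^*(j)$, as required. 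The non-equality of tiers along an edge in $G^*$ follows from the same computation.

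This proof is essentially a direct unwinding of the definition; the only conceptual point is realising that label reversal $i \mapsto n+1-i$ is forced by the prescription $\mathbf{t}^*(i) = (m+1) - \mathbf{t}(n+1-i)$. I do not anticipate any serious obstacle, though one subtle point worth remarking is the convention for \emph{tiered graph isomorphism}: our $\phi$ does not preserve tiers on the nose, but rather composes with the tier-reversal of $[m]$, and this is the natural sense in which $G \cong G^*$ (mirroring the two-tier case treated in \cite{DY}).
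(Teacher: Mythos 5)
Your proof is correct and uses exactly the same map as the paper, namely $\gamma(i)=n+1-i$; the paper's own argument is just a terser version of yours, asserting that adjacency is preserved under this map. Your additional checks (the explicit tier computation $\mathbf{t}^*(\phi(i)) = (m+1)-\mathbf{t}(i)$ and the well-posedness of $G^*$ as a tiered graph) are welcome details that the paper omits.
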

\begin{proof}
    From the definition of the dual graph we have $V(G^{*})=V(G)$,  where vertices are in total order. Define a map $\gamma : V(G)\mapsto V(G^{*}) $ by $\gamma(i)= n+1-i$. Since the adjacency of $G$ is preserved in $G^{*}$ under $\gamma$, the result follows.    
\end{proof}
Note that, the map $\gamma$ sends the minimal element of the graph $G$ to maximal element of $G^{*}$. Thus, the vertices that contribute to the externally active edges, their images may not contribute to the externally active edges of $G^{*}$. For example the tiered tree $T$ and its dual $T^{*}$ in Figure ~\ref{fig6}. The graph $G^{*}$ is obtained by flipping the graph $G$ from bottom to top.    

\begin{lemma}
   For a tiered graph $G$, let $F_{m}$ be the set of spannig tiered forest and  $F_{m^{*}}$ be the set of its dual. Then there is a bijection from $F_{m} \mapsto F_{m^{*}}.$
    
\end{lemma}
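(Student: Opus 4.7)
The plan is to exploit the graph isomorphism $\gamma: V(G) \to V(G^*)$, $\gamma(i) = n+1-i$, established in the previous lemma. Since $\gamma$ is a vertex bijection that preserves adjacency between $G$ and its dual $G^*$, it induces a natural map on edge sets, and hence on spanning subgraphs. The task reduces to checking that (a) the forest structure is preserved and (b) the image is a bona fide tiered forest of $G^*$.

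First I would define $\Gamma : F_m \to F_{m^*}$ by sending a spanning tiered forest $F$ of $G$, with edge set $E(F)$, to the spanning subgraph $\Gamma(F)$ of $G^*$ with edge set $\gamma(E(F)) = \{(\gamma(i),\gamma(j)) : (i,j) \in E(F)\}$. Since $\gamma$ is a bijection on vertex sets, $\Gamma(F)$ spans $V(G^*) = V(G)$. Since $\gamma$ is a graph isomorphism by the previous lemma, it carries cycles to cycles, so the acyclicity of $F$ transfers to $\Gamma(F)$, and connected components are preserved; thus $\Gamma(F)$ is a spanning forest of $G^*$.

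Next I would verify the tiering axiom for $\Gamma(F)$. Take any edge $(\gamma(i),\gamma(j)) \in \gamma(E(F))$ with $\gamma(i) < \gamma(j)$, i.e.\ $n+1-i < n+1-j$, so $j < i$. Then $(j,i) \in E(F) \subseteq E(G)$ with $j < i$ forces $\textbf{t}(j) < \textbf{t}(i)$. Using the dual tiering $\textbf{t}^{*}(v) = (m+1) - \textbf{t}(n+1-v)$, this gives $\textbf{t}^{*}(\gamma(i)) = (m+1)-\textbf{t}(i) < (m+1)-\textbf{t}(j) = \textbf{t}^{*}(\gamma(j))$, which is precisely the tiering condition on the edge $(\gamma(i),\gamma(j))$ in $G^*$. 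Hence $\Gamma(F) \in F_{m^*}$.

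Finally, since $\gamma$ is an involution ($\gamma \circ \gamma = \mathrm{id}$) and $(G^*)^* = G$ (which is immediate from the two defining relations of the dual), the same construction applied to $G^*$ yields a map $\Gamma^{-1}: F_{m^*} \to F_m$ inverse to $\Gamma$; this establishes the bijection. The only genuinely delicate step is the tier check in the middle paragraph, where one must be careful that the ordering reversal $i \mapsto n+1-i$ interacts consistently with the tier reversal $\textbf{t} \mapsto (m+1) - \textbf{t}$; everything else is formal transport along the isomorphism $\gamma$.
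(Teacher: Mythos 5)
Your proposal is correct and follows essentially the same route as the paper: both send a spanning forest to its image under the reflection $i \mapsto n+1-i$ and conclude bijectivity from the fact that duality is an involution ($(G^{*})^{*}=G$). Your write-up is in fact more careful than the paper's, which asserts $\theta(F)=F^{*}$ and the involution property without explicitly checking, as you do, that the image is a spanning forest of $G^{*}$ satisfying the dual tiering condition.
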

\begin{proof}
    Define the map $\theta: F_{m} \mapsto F_{m^{*}}$ by $\theta(F)=F^{*}$. The map $\theta$ is surjective since $(m^{*})^{*} =m$ and from the definition of dual tiered graph we have, $\theta(F^{*})= (F^{*})^{*}= F$. This map $\theta$ is also injective. Let $F^{*}_{1}=F^{*}_{2}$ then $\theta(F_{1})=F^{*}_{1}=F^{*}_{2}=\theta(F_{2})$. Also for  $\theta(F^{*}_{1})=\theta(F^{*}_{2})$, we have $F_{1}=\theta(F^{*}_{1})=\theta(F^{*}_{2})=F_{2}$. Hence the proof.
\end{proof}

\begin{proposition}
    For a complete tiered graph $G$ and let $F_{m}$ be the set of tiered forests and $F^{*}_{m^{*}}$ be the set of its dual.Then the algebras $\mathcal{B}^{F_{m}}_{G}$ and $\mathcal{B}^{F_{m^{*}}}_{G^{*}}$ are isomorphic. 
\end{proposition}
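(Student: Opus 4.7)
The plan is to lift the graph isomorphism $\gamma\colon G \to G^{*}$, $\gamma(i)=n+1-i$, from the preceding lemma to an isomorphism of polynomial rings, and then show that this lift descends to the quotients defining $\mathcal{B}_G^{F_m}$ and $\mathcal{B}_{G^{*}}^{F_{m^{*}}}$. The forest bijection $\theta\colon F_m \to F_{m^{*}}$ will play the role of matching the combinatorial data indexing the two algebras.

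First, I would define the ring homomorphism $\Psi\colon \mathbb{K}[x_1,\dots,x_n] \to \mathbb{K}[x_1,\dots,x_n]$ by $\Psi(x_i)=x_{n+1-i}$. Since $\gamma$ is an involution, so is $\Psi$, hence $\Psi$ is a ring automorphism of the polynomial ring. Next I would check that $\Psi$ carries the defining ideal $\mathcal{J}_G$ of $\mathcal{B}_G^{F_m}$ onto the defining ideal $\mathcal{J}_{G^{*}}$ of $\mathcal{B}_{G^{*}}^{F_{m^{*}}}$. For any subset $I\subseteq[n]$, write $I^{*}=\gamma(I)=\{n+1-i : i\in I\}$; since $\gamma$ is a graph isomorphism, the number of edges from $I$ to its complement in $G$ equals that from $I^{*}$ to its complement in $G^{*}$, so $D_I^G = D_{I^{*}}^{G^{*}}$. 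Consequently
$$
\Psi(p_I) \;=\; \Bigl(\sum_{i\in I} x_{n+1-i}\Bigr)^{D_I+1} \;=\; \Bigl(\sum_{j\in I^{*}} x_j\Bigr)^{D_{I^{*}}+1} \;=\; p_{I^{*}}^{G^{*}},
$$
which is a defining generator of $\mathcal{J}_{G^{*}}$. Thus $\Psi(\mathcal{J}_G)\subseteq \mathcal{J}_{G^{*}}$, and the reverse inclusion follows from $\Psi^{2}=\mathrm{id}$.

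Finally, I would match the forest decorations: by the preceding lemma, $\theta$ sends each spanning tiered forest $F\in F_m$ of $G$ bijectively to its image under $\gamma$, which lies in $F_{m^{*}}$. Hence the subset-indexed generators used to build $\mathcal{B}_G^{F_m}$ and $\mathcal{B}_{G^{*}}^{F_{m^{*}}}$ correspond under $\Psi$ via $I \leftrightarrow I^{*}$, so $\Psi$ descends to a well-defined, involutive algebra homomorphism $\overline{\Psi}\colon \mathcal{B}_G^{F_m} \to \mathcal{B}_{G^{*}}^{F_{m^{*}}}$, which is therefore an isomorphism.

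The main obstacle will be ensuring the precise compatibility between the forest-indexed structure denoted by the superscript $F_m$ and the subset-indexed generators $p_I$ of the power ideal: once one fixes the convention that the families of subsets relevant to $\mathcal{B}_G^{F_m}$ and $\mathcal{B}_{G^{*}}^{F_{m^{*}}}$ are related by $I\mapsto I^{*}$ in the same way that $F\mapsto \theta(F)=\gamma(F)$, the ring-level computation above is immediate and the proposition follows by transport of structure.
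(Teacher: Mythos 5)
Your proposal is correct and follows essentially the same route as the paper, which argues via the forest bijection $\theta$ from the preceding lemma together with the fact that the dual is obtained by flipping the graph and that this preserves adjacency. You simply make explicit what the paper leaves as ``straightforward'': the ring automorphism $\Psi(x_i)=x_{n+1-i}$, the identity $D_I = D_{I^{*}}$ coming from the graph isomorphism, and the resulting correspondence of the power ideals, which is a welcome sharpening but not a different argument.
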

\begin{proof}
   The proof is straightforward.  From the above lemma we have bijection between tiered forests and its dual forests. Since we know the dual graph can be obtained by flipping the tiered graph and this bijection preserves the adjacency relation. 
\end{proof}
\subsection{Whitney's operation}
Now we define tiered graphs in terms of graph operations: \textit{identification, cleaving and twisting}. These operations are appeared as Whitney's 2-isomorphism theorem \cite{Ox}. 
\begin{itemize}
    \item [A.](\textbf{Identification}) Let $v_{1}$ and $v_{2}$ are two distinct vertices from different components. Then we can identify  $v_{1}$ and $v_{2}$ by a new vertex $v_{12}$. So, all those vertices which were adjacent to $v_{1}$ and $v_{2}$ is now adjacent to $v_{12}$.
    \item[B.](\textbf{Cleaving}) A graph only be cleft at cut-vertex or at a vertex incident with a loop. This is a reverse operation of identification.
    \item[C.](\textbf{Twisting)} Assume that $G$ be a graph obtained from two disjoint graphs $G_{1}$ and $G_{2}$ by identifying $u_{1}$ of $V(G_{1})$ and $u_{2}$ of $V(G_{2})$ as a new vertex $u$ of $G$ and identify $v_{1}$ of $G_{1}$ and $v_{2}$ of $G_{2}$ as a vertex $v$ of $G$. In a twisting about $\{u,v\}$, we identify $u_{1}$ with $v_{2}$ and $u_{2}$ with $v_{1}$. Thus, we obtain a new graph $G_{\circ}$ and call $G_{1}$ and $G_{2}$ be the pieces of the twisting. 
    
\end{itemize}
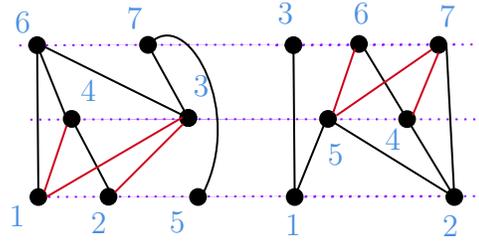
\begin{figure}
    \centering
    \tikzset{every picture/.style={line width=0.75pt}} 

\begin{tikzpicture}[x=0.35pt,y=0.35pt,yscale=-1,xscale=1]

\draw [color={rgb, 255:red, 144; green, 19; blue, 254 }  ,draw opacity=1 ][fill={rgb, 255:red, 144; green, 19; blue, 254 }  ,fill opacity=1 ][line width=0.75]  [dash pattern={on 0.84pt off 2.51pt}]  (101.79,44.66) -- (249.43,44.06) -- (573,43.6) ;
\draw [color={rgb, 255:red, 144; green, 19; blue, 254 }  ,draw opacity=1 ][fill={rgb, 255:red, 144; green, 19; blue, 254 }  ,fill opacity=1 ][line width=0.75]  [dash pattern={on 0.84pt off 2.51pt}]  (113.79,124.97) -- (261.43,124.36) -- (611,124.6) ;
\draw [color={rgb, 255:red, 144; green, 19; blue, 254 }  ,draw opacity=1 ][fill={rgb, 255:red, 144; green, 19; blue, 254 }  ,fill opacity=1 ][line width=0.75]  [dash pattern={on 0.84pt off 2.51pt}]  (114,208.62) -- (261.64,208.02) -- (583,207.6) ;
\draw  [fill={rgb, 255:red, 0; green, 0; blue, 0 }  ,fill opacity=1 ][line width=0.75]  (112.64,44.36) .. controls (112.64,39.65) and (116.44,35.83) .. (121.13,35.83) .. controls (125.81,35.83) and (129.61,39.65) .. (129.61,44.36) .. controls (129.61,49.07) and (125.81,52.89) .. (121.13,52.89) .. controls (116.44,52.89) and (112.64,49.07) .. (112.64,44.36) -- cycle ;
\draw  [fill={rgb, 255:red, 0; green, 0; blue, 0 }  ,fill opacity=1 ][line width=0.75]  (286.51,208.6) .. controls (286.51,203.89) and (290.31,200.07) .. (295,200.07) .. controls (299.69,200.07) and (303.49,203.89) .. (303.49,208.6) .. controls (303.49,213.31) and (299.69,217.13) .. (295,217.13) .. controls (290.31,217.13) and (286.51,213.31) .. (286.51,208.6) -- cycle ;
\draw  [fill={rgb, 255:red, 0; green, 0; blue, 0 }  ,fill opacity=1 ][line width=0.75]  (276,123.13) .. controls (276,118.42) and (279.8,114.6) .. (284.49,114.6) .. controls (289.17,114.6) and (292.97,118.42) .. (292.97,123.13) .. controls (292.97,127.84) and (289.17,131.66) .. (284.49,131.66) .. controls (279.8,131.66) and (276,127.84) .. (276,123.13) -- cycle ;
\draw  [fill={rgb, 255:red, 0; green, 0; blue, 0 }  ,fill opacity=1 ][line width=0.75]  (150,124.13) .. controls (150,119.42) and (153.8,115.6) .. (158.49,115.6) .. controls (163.17,115.6) and (166.97,119.42) .. (166.97,124.13) .. controls (166.97,128.84) and (163.17,132.66) .. (158.49,132.66) .. controls (153.8,132.66) and (150,128.84) .. (150,124.13) -- cycle ;
\draw  [fill={rgb, 255:red, 0; green, 0; blue, 0 }  ,fill opacity=1 ][line width=0.75]  (189.82,208.32) .. controls (189.82,203.61) and (193.62,199.79) .. (198.31,199.79) .. controls (202.99,199.79) and (206.79,203.61) .. (206.79,208.32) .. controls (206.79,213.03) and (202.99,216.85) .. (198.31,216.85) .. controls (193.62,216.85) and (189.82,213.03) .. (189.82,208.32) -- cycle ;
\draw  [fill={rgb, 255:red, 0; green, 0; blue, 0 }  ,fill opacity=1 ][line width=0.75]  (232.46,44.06) .. controls (232.46,39.35) and (236.26,35.53) .. (240.95,35.53) .. controls (245.63,35.53) and (249.43,39.35) .. (249.43,44.06) .. controls (249.43,48.77) and (245.63,52.59) .. (240.95,52.59) .. controls (236.26,52.59) and (232.46,48.77) .. (232.46,44.06) -- cycle ;
\draw  [fill={rgb, 255:red, 0; green, 0; blue, 0 }  ,fill opacity=1 ][line width=0.75]  (114,208.62) .. controls (114,203.91) and (117.8,200.09) .. (122.49,200.09) .. controls (127.17,200.09) and (130.97,203.91) .. (130.97,208.62) .. controls (130.97,213.33) and (127.17,217.15) .. (122.49,217.15) .. controls (117.8,217.15) and (114,213.33) .. (114,208.62) -- cycle ;
\draw [color={rgb, 255:red, 208; green, 2; blue, 27 }  ,draw opacity=1 ]   (153,131.6) -- (129,202.6) ;
\draw    (158.49,124.13) -- (198.31,199.79) ;
\draw  [fill={rgb, 255:red, 0; green, 0; blue, 0 }  ,fill opacity=1 ][line width=0.75]  (276,123.13) .. controls (276,118.42) and (279.8,114.6) .. (284.49,114.6) .. controls (289.17,114.6) and (292.97,118.42) .. (292.97,123.13) .. controls (292.97,127.84) and (289.17,131.66) .. (284.49,131.66) .. controls (279.8,131.66) and (276,127.84) .. (276,123.13) -- cycle ;
\draw    (121.13,44.36) -- (122.49,208.62) ;
\draw    (121.13,44.36) -- (155,128.6) ;
\draw    (126,44.6) -- (284.49,123.13) ;
\draw    (240.95,44.06) -- (284.49,123.13) ;
\draw    (246.95,39.06) .. controls (286.95,9.06) and (344,116.6) .. (301,203.6) ;
\draw [color={rgb, 255:red, 208; green, 2; blue, 27 }  ,draw opacity=1 ]   (130.97,208.62) -- (278,122.6) ;
\draw [color={rgb, 255:red, 208; green, 2; blue, 27 }  ,draw opacity=1 ]   (205,203.6) -- (279,129.6) ;
\draw [color={rgb, 255:red, 144; green, 19; blue, 254 }  ,draw opacity=1 ][fill={rgb, 255:red, 144; green, 19; blue, 254 }  ,fill opacity=1 ][line width=0.75]  [dash pattern={on 0.84pt off 2.51pt}]  (378.79,44.66) -- (526.43,44.06) -- (600,43.6) ;
\draw [color={rgb, 255:red, 144; green, 19; blue, 254 }  ,draw opacity=1 ][fill={rgb, 255:red, 144; green, 19; blue, 254 }  ,fill opacity=1 ][line width=0.75]  [dash pattern={on 0.84pt off 2.51pt}]  (391,208.62) -- (538.64,208.02) -- (610,207.6) ;
\draw  [fill={rgb, 255:red, 0; green, 0; blue, 0 }  ,fill opacity=1 ][line width=0.75]  (389.64,44.36) .. controls (389.64,39.65) and (393.44,35.83) .. (398.13,35.83) .. controls (402.81,35.83) and (406.61,39.65) .. (406.61,44.36) .. controls (406.61,49.07) and (402.81,52.89) .. (398.13,52.89) .. controls (393.44,52.89) and (389.64,49.07) .. (389.64,44.36) -- cycle ;
\draw  [fill={rgb, 255:red, 0; green, 0; blue, 0 }  ,fill opacity=1 ][line width=0.75]  (563.51,208.6) .. controls (563.51,203.89) and (567.31,200.07) .. (572,200.07) .. controls (576.69,200.07) and (580.49,203.89) .. (580.49,208.6) .. controls (580.49,213.31) and (576.69,217.13) .. (572,217.13) .. controls (567.31,217.13) and (563.51,213.31) .. (563.51,208.6) -- cycle ;
\draw  [fill={rgb, 255:red, 0; green, 0; blue, 0 }  ,fill opacity=1 ][line width=0.75]  (427,124.13) .. controls (427,119.42) and (430.8,115.6) .. (435.49,115.6) .. controls (440.17,115.6) and (443.97,119.42) .. (443.97,124.13) .. controls (443.97,128.84) and (440.17,132.66) .. (435.49,132.66) .. controls (430.8,132.66) and (427,128.84) .. (427,124.13) -- cycle ;
\draw  [fill={rgb, 255:red, 0; green, 0; blue, 0 }  ,fill opacity=1 ][line width=0.75]  (546.46,44.06) .. controls (546.46,39.35) and (550.26,35.53) .. (554.95,35.53) .. controls (559.63,35.53) and (563.43,39.35) .. (563.43,44.06) .. controls (563.43,48.77) and (559.63,52.59) .. (554.95,52.59) .. controls (550.26,52.59) and (546.46,48.77) .. (546.46,44.06) -- cycle ;
\draw  [fill={rgb, 255:red, 0; green, 0; blue, 0 }  ,fill opacity=1 ][line width=0.75]  (391,208.62) .. controls (391,203.91) and (394.8,200.09) .. (399.49,200.09) .. controls (404.17,200.09) and (407.97,203.91) .. (407.97,208.62) .. controls (407.97,213.33) and (404.17,217.15) .. (399.49,217.15) .. controls (394.8,217.15) and (391,213.33) .. (391,208.62) -- cycle ;
\draw [color={rgb, 255:red, 0; green, 0; blue, 0 }  ,draw opacity=1 ]   (432,128.6) -- (399.49,208.62) ;
\draw    (435.49,124.13) -- (572,208.6) ;
\draw  [fill={rgb, 255:red, 0; green, 0; blue, 0 }  ,fill opacity=1 ][line width=0.75]  (512.46,124.36) .. controls (512.46,119.65) and (516.26,115.83) .. (520.95,115.83) .. controls (525.63,115.83) and (529.43,119.65) .. (529.43,124.36) .. controls (529.43,129.07) and (525.63,132.89) .. (520.95,132.89) .. controls (516.26,132.89) and (512.46,129.07) .. (512.46,124.36) -- cycle ;
\draw    (398.13,44.36) -- (399.49,208.62) ;
\draw  [fill={rgb, 255:red, 0; green, 0; blue, 0 }  ,fill opacity=1 ][line width=0.75]  (460.46,43.06) .. controls (460.46,38.35) and (464.26,34.53) .. (468.95,34.53) .. controls (473.63,34.53) and (477.43,38.35) .. (477.43,43.06) .. controls (477.43,47.77) and (473.63,51.59) .. (468.95,51.59) .. controls (464.26,51.59) and (460.46,47.77) .. (460.46,43.06) -- cycle ;
\draw    (520.95,124.36) -- (572,208.6) ;
\draw    (469.9,40.13) -- (520.95,124.36) ;
\draw    (563.43,44.06) -- (572,208.6) ;
\draw [color={rgb, 255:red, 208; green, 2; blue, 27 }  ,draw opacity=1 ]   (464.51,48.58) -- (441,117.6) ;
\draw [color={rgb, 255:red, 208; green, 2; blue, 27 }  ,draw opacity=1 ]   (547,49.6) -- (444.43,120.61) ;
\draw [color={rgb, 255:red, 208; green, 2; blue, 27 }  ,draw opacity=1 ]   (554.95,52.59) -- (527,120.13) ;

\draw (94,5) node [anchor=north west][inner sep=0.75pt]  [font=\large,color={rgb, 255:red, 74; green, 144; blue, 226 }  ,opacity=1 ]  {$6$};
\draw (192,220) node [anchor=north west][inner sep=0.75pt]  [color={rgb, 255:red, 74; green, 144; blue, 226 }  ,opacity=1 ]  {$$};
\draw (469,220) node [anchor=north west][inner sep=0.75pt]  [color={rgb, 255:red, 74; green, 144; blue, 226 }  ,opacity=1 ]  {$$};
\draw (386,225) node [anchor=north west][inner sep=0.75pt]  [font=\large,color={rgb, 255:red, 74; green, 144; blue, 226 }  ,opacity=1 ]  {$1$};
\draw (494,132) node [anchor=north west][inner sep=0.75pt]  [font=\large,color={rgb, 255:red, 74; green, 144; blue, 226 }  ,opacity=1 ]  {$4$};
\draw (432,148) node [anchor=north west][inner sep=0.75pt]  [font=\large,color={rgb, 255:red, 74; green, 144; blue, 226 }  ,opacity=1 ]  {$5$};
\draw (553,-1) node [anchor=north west][inner sep=0.75pt]  [font=\large,color={rgb, 255:red, 74; green, 144; blue, 226 }  ,opacity=1 ]  {$7$};
\draw (460,-4) node [anchor=north west][inner sep=0.75pt]  [font=\large,color={rgb, 255:red, 74; green, 144; blue, 226 }  ,opacity=1 ]  {$6$};
\draw (378,-4) node [anchor=north west][inner sep=0.75pt]  [font=\large,color={rgb, 255:red, 74; green, 144; blue, 226 }  ,opacity=1 ]  {$3$};
\draw (261,221) node [anchor=north west][inner sep=0.75pt]  [font=\large,color={rgb, 255:red, 74; green, 144; blue, 226 }  ,opacity=1 ]  {$5$};
\draw (176,223) node [anchor=north west][inner sep=0.75pt]  [font=\large,color={rgb, 255:red, 74; green, 144; blue, 226 }  ,opacity=1 ]  {$2$};
\draw (88,215) node [anchor=north west][inner sep=0.75pt]  [font=\large,color={rgb, 255:red, 74; green, 144; blue, 226 }  ,opacity=1 ]  {$1$};
\draw (287,74) node [anchor=north west][inner sep=0.75pt]  [font=\large,color={rgb, 255:red, 74; green, 144; blue, 226 }  ,opacity=1 ]  {$3$};
\draw (165,79) node [anchor=north west][inner sep=0.75pt]  [font=\large,color={rgb, 255:red, 74; green, 144; blue, 226 }  ,opacity=1 ]  {$4$};
\draw (215,1) node [anchor=north west][inner sep=0.75pt]  [font=\large,color={rgb, 255:red, 74; green, 144; blue, 226 }  ,opacity=1 ]  {$7$};
\draw (556,226) node [anchor=north west][inner sep=0.75pt]  [font=\large,color={rgb, 255:red, 74; green, 144; blue, 226 }  ,opacity=1 ]  {$2$};
\end{tikzpicture}
\caption{A tiered graph (left) with red edges are externally active and its dual graph (right) with red edges as the image.}\label{fig6}
    \label{fig:enter-label}
\end{figure}

If two graphs $G$ and $H$ obtained from each other from the above operations we say $G$ is 2-isomorphic to $H$ and also their graphic matroids $M(G)$ and $M(H)$ are isomorphic. We generalized the notion of identification for tiered graphs. 
\begin{Def}
    Let $G_{1}=(V_{1}, E_{1},\textbf{t}_{1} )$ and $G_{2}=(V_{2}, E_{2}, \textbf{t}_{2})$ be two tiered graphs. We denote the identification operation by $\bullet$ and write $i=i^{\prime} \bullet i^{\dprime}$ for $i ^{\prime} \in V_{1}$ and $i^{\dprime} \in V_{2}$. Therefore, if $i^{\prime} \in [m^{\prime}]$ and $i^{\dprime} \in [m^{\dprime}]$, we define the identification $i$ satisfying $i^{\prime} \prec i $ and $i^{\dprime} \prec i $.  Similarly we define the identification operation to the tiering function. For two disjoint sets $A=\{1^{\prime}, \dots,m^{\prime}\}$ and $B=\{1^{\dprime}, \dots, m^{\dprime}\}$ we write identification $A \bullet B= \{1^{\prime} \bullet 1^{\dprime}, \dots, m^{\prime} \bullet m^{\dprime}\}= [m^{\prime} \bullet m^{\dprime}]$. Thus identification of tiers is a function $\textbf{t}: V_{1} \bullet V_{2} \mapsto [m^{\prime} \bullet m^{\dprime}]
      $ such that for $j^{\prime} \in V_{1}$ and $j^{\prime}$ is adjacent to $i$ if and only if $j^{\prime} \prec i$ and $\textbf{t}(j^{\prime}) \prec \textbf{t}(i) $. Similarly for the vertices of $G_{2}$. We write identification of tiers by $\textbf{t}(i)= \textbf{t}_{1}(i^{\prime}) \bullet \textbf{t}_{2}(i^{\dprime})$, which we visualized as concatenation of tiers. For an illustration see Figure ~\ref{fig7} and ~\ref{fig8}. 
    
\end{Def}
We remark that such operations on tiering functions do not harm the basic properties of tiered graph. The resulting graph is also a tiered graph. 
\begin{lemma}\label{TT}
    Let $G_{1}=(V_{1}, E_{1},\textbf{t}_{1} )$ and $G_{2}=(V_{2}, E_{2}, \textbf{t}_{2})$ be two tiered graphs. Then $G_{1}$ and $G_{2}$ are closed under Whitney's operations. 
\end{lemma}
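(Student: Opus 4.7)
The plan is to verify closure under each of the three Whitney operations—identification, cleaving, and twisting—separately, in each case checking that the output graph still satisfies the two defining conditions of a tiered graph: surjectivity of the tiering function onto some $[m]$, and the order-compatibility requirement that every edge $(i,j)$ with $i<j$ has $\textbf{t}(i) < \textbf{t}(j)$ (and in particular $\textbf{t}(i) \ne \textbf{t}(j)$).

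For \emph{identification}, the bulk of the work has already been carried out in the definition introduced immediately before the lemma. The concatenated order $i' \prec i$ and $i'' \prec i$ for $i = i' \bullet i''$, together with the tiering $\textbf{t}(i) = \textbf{t}_{1}(i') \bullet \textbf{t}_{2}(i'')$, is designed exactly so that each edge of $G_{1} \bullet G_{2}$ inherits its tier inequality from the piece in which it originally sat. I would therefore check case by case (edge lying entirely in $G_{1}$, entirely in $G_{2}$, or incident to an identification vertex) that the concatenated data yields a valid tiering; this reduces to the fact that $G_{1}$ and $G_{2}$ were themselves tiered, together with the extension of the order $\prec$ across pieces.

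For \emph{cleaving}, which inverts identification and is carried out at a cut-vertex or a vertex incident with a loop, I would simply restrict the tiering function of the parent graph to each of the two resulting pieces. Both defining conditions of a tiered graph are local—they involve only pairs of adjacent vertices—and every edge of a piece is an edge of the parent graph, so the restricted tiering automatically satisfies them. Surjectivity onto the relevant initial segment of positive integers is achieved by a straightforward renumbering of tiers on each piece.

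\emph{Twisting} is the step I expect to be the main obstacle. Suppose $G$ arises from $G_{1}, G_{2}$ by identifying $u_{1} \sim u_{2}$ and $v_{1} \sim v_{2}$, and let $G_{\circ}$ be the twist in which these identifications are swapped to $u_{1} \sim v_{2}$ and $u_{2} \sim v_{1}$. All edges lying entirely inside $G_{1}$ or $G_{2}$ are untouched, so their tier inequalities carry over verbatim; the remaining edges are exactly those incident to the twisted vertices. For those, I would rebuild the tier assignment at the two new identification vertices by the same concatenation procedure used in the identification step, but with the swapped partners $(u_{1}, v_{2})$ and $(u_{2}, v_{1})$. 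The delicate point—and the place where the argument requires care—is to show that the resulting label order and tier values at the swapped vertices remain mutually consistent; this amounts to verifying that the order $\prec$ and the tier data inherited from $G_{2}$ respect the new gluing, possibly after a re-enumeration of labels in $G_{2}$ forced by the swap. Once these two local checks at the twisted vertices are established, all other edges are handled by the same piece-internal argument used for identification, completing the proof.
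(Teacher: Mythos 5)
Your plan follows essentially the same route as the paper's proof: identification is handled by the $\bullet$-concatenation of labels and tiers from the preceding definition, cleaving is treated as the local inverse (restriction of the tiering to each piece), and twisting is reduced to re-gluing with swapped partners and checking the order/tier compatibility at the two new identification vertices. The only differences are cosmetic: you spell out cleaving, which the paper leaves implicit, and you explicitly flag the consistency check at the twisted vertices as the delicate step, which the paper asserts (via the dichotomy $k \prec j'\bullet i''$ with $\textbf{t}(k) \prec \textbf{t}(j'\bullet i'')$, or the reverse) without further verification.
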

\begin{proof}
    Let $G_{1}$ and $G_{2}$ are two disjoint tiered graphs. Let a vertex $i^{\prime}$ of $G_{1}$ lies in tier $\textbf{t}(i^{\prime})$ and $i^{\dprime}$ of $G_{2}$ in tier $\textbf{t}(i^{\dprime})$. Identifying $i^{\prime}$ and $i^{\dprime}$ by new labelling $i$. That is, we write$i= i^{\prime} \bullet i^{\dprime}$ and for tiering function write $\textbf{t}(i)= \textbf{t}_{1}(i^{\prime}) \bullet \textbf{t}_{2}(i^{\dprime}) $. By the definition, the resulting graph is a tiered graph $G$. Also we assume that one of $G_{1}$ or $G_{2}$ might have lower tier. Without loss of generality assume $G_{1}$ has lower tier \textit{i.e} $m^{\prime} \leqslant m^{\dprime}$. Then we will add one more tier to $G_{1}$ which will call \textit{empty tier} (tier contains no vertex). Thus from the construction and above definition, the first two conditions of Whitney's isomorphism is satisfied. For example, see Figures~\ref{fig7} and ~\ref{fig8} below.
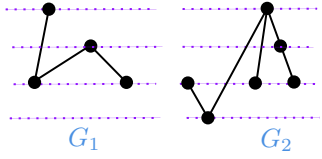
\begin{figure}
    \centering
\tikzset{every picture/.style={line width=0.75pt}} 

\begin{tikzpicture}[x=0.25pt,y=0.25pt,yscale=-1,xscale=1]

\draw [color={rgb, 255:red, 144; green, 19; blue, 254 }  ,draw opacity=1 ][fill={rgb, 255:red, 144; green, 19; blue, 254 }  ,fill opacity=1 ][line width=0.75]  [dash pattern={on 0.84pt off 2.51pt}]  (101.79,44.66) -- (329.43,44.06) (369.43,44.06)-- (573,43.6) ;
\draw [color={rgb, 255:red, 144; green, 19; blue, 254 }  ,draw opacity=1 ][fill={rgb, 255:red, 144; green, 19; blue, 254 }  ,fill opacity=1 ][line width=0.75]  [dash pattern={on 0.84pt off 2.51pt}]  (113.79,157.97) -- (329.43,157.36) (365.43,157.36)-- (570,157.6) ;
\draw [color={rgb, 255:red, 144; green, 19; blue, 254 }  ,draw opacity=1 ][fill={rgb, 255:red, 144; green, 19; blue, 254 }  ,fill opacity=1 ][line width=0.75]  [dash pattern={on 0.84pt off 2.51pt}]  (108,209.62) -- (335.64,209.02) (375.64,209.02)-- (577,208.6) ;
\draw  [fill={rgb, 255:red, 0; green, 0; blue, 0 }  ,fill opacity=1 ][line width=0.75]  (489,43.13) .. controls (489,38.42) and (492.8,34.6) .. (497.49,34.6) .. controls (502.17,34.6) and (505.97,38.42) .. (505.97,43.13) .. controls (505.97,47.84) and (502.17,51.66) .. (497.49,51.66) .. controls (492.8,51.66) and (489,47.84) .. (489,43.13) -- cycle ;
\draw  [fill={rgb, 255:red, 0; green, 0; blue, 0 }  ,fill opacity=1 ][line width=0.75]  (158.64,44.36) .. controls (158.64,39.65) and (162.44,35.83) .. (167.13,35.83) .. controls (171.81,35.83) and (175.61,39.65) .. (175.61,44.36) .. controls (175.61,49.07) and (171.81,52.89) .. (167.13,52.89) .. controls (162.44,52.89) and (158.64,49.07) .. (158.64,44.36) -- cycle ;
\draw  [fill={rgb, 255:red, 0; green, 0; blue, 0 }  ,fill opacity=1 ][line width=0.75]  (529.51,154.6) .. controls (529.51,149.89) and (533.31,146.07) .. (538,146.07) .. controls (542.69,146.07) and (546.49,149.89) .. (546.49,154.6) .. controls (546.49,159.31) and (542.69,163.13) .. (538,163.13) .. controls (533.31,163.13) and (529.51,159.31) .. (529.51,154.6) -- cycle ;
\draw  [fill={rgb, 255:red, 0; green, 0; blue, 0 }  ,fill opacity=1 ][line width=0.75]  (276,155.13) .. controls (276,150.42) and (279.8,146.6) .. (284.49,146.6) .. controls (289.17,146.6) and (292.97,150.42) .. (292.97,155.13) .. controls (292.97,159.84) and (289.17,163.66) .. (284.49,163.66) .. controls (279.8,163.66) and (276,159.84) .. (276,155.13) -- cycle ;
\draw  [fill={rgb, 255:red, 0; green, 0; blue, 0 }  ,fill opacity=1 ][line width=0.75]  (137,155.13) .. controls (137,150.42) and (140.8,146.6) .. (145.49,146.6) .. controls (150.17,146.6) and (153.97,150.42) .. (153.97,155.13) .. controls (153.97,159.84) and (150.17,163.66) .. (145.49,163.66) .. controls (140.8,163.66) and (137,159.84) .. (137,155.13) -- cycle ;
\draw  [fill={rgb, 255:red, 0; green, 0; blue, 0 }  ,fill opacity=1 ][line width=0.75]  (471,156.13) .. controls (471,151.42) and (474.8,147.6) .. (479.49,147.6) .. controls (484.17,147.6) and (487.97,151.42) .. (487.97,156.13) .. controls (487.97,160.84) and (484.17,164.66) .. (479.49,164.66) .. controls (474.8,164.66) and (471,160.84) .. (471,156.13) -- cycle ;
\draw  [fill={rgb, 255:red, 0; green, 0; blue, 0 }  ,fill opacity=1 ][line width=0.75]  (222,100.13) .. controls (222,95.42) and (225.8,91.6) .. (230.49,91.6) .. controls (235.17,91.6) and (238.97,95.42) .. (238.97,100.13) .. controls (238.97,104.84) and (235.17,108.66) .. (230.49,108.66) .. controls (225.8,108.66) and (222,104.84) .. (222,100.13) -- cycle ;
\draw  [fill={rgb, 255:red, 0; green, 0; blue, 0 }  ,fill opacity=1 ][line width=0.75]  (399.35,208.81) .. controls (399.35,204.1) and (403.15,200.28) .. (407.84,200.28) .. controls (412.52,200.28) and (416.32,204.1) .. (416.32,208.81) .. controls (416.32,213.52) and (412.52,217.34) .. (407.84,217.34) .. controls (403.15,217.34) and (399.35,213.52) .. (399.35,208.81) -- cycle ;
\draw  [fill={rgb, 255:red, 0; green, 0; blue, 0 }  ,fill opacity=1 ][line width=0.75]  (509,100.13) .. controls (509,95.42) and (512.8,91.6) .. (517.49,91.6) .. controls (522.17,91.6) and (525.97,95.42) .. (525.97,100.13) .. controls (525.97,104.84) and (522.17,108.66) .. (517.49,108.66) .. controls (512.8,108.66) and (509,104.84) .. (509,100.13) -- cycle ;
\draw    (167.13,44.36) -- (145.49,155.13) ;
\draw    (228.49,100.13) -- (148,151.6) ;
\draw    (230.49,100.13) -- (284.49,155.13) ;
\draw    (497.49,43.13) -- (481.26,153.86) ;
\draw    (497.49,43.13) -- (538,154.6) ;
\draw [color={rgb, 255:red, 144; green, 19; blue, 254 }  ,draw opacity=1 ][fill={rgb, 255:red, 144; green, 19; blue, 254 }  ,fill opacity=1 ][line width=0.75]  [dash pattern={on 0.84pt off 2.51pt}]  (111.79,101.97) -- (339.43,101.36) (369.43,101.36)-- (568,101.6) ;
\draw    (497.49,43.13) -- (407.84,208.81) ;
\draw  [fill={rgb, 255:red, 0; green, 0; blue, 0 }  ,fill opacity=1 ][line width=0.75]  (369,156.13) .. controls (369,151.42) and (372.8,147.6) .. (377.49,147.6) .. controls (382.17,147.6) and (385.97,151.42) .. (385.97,156.13) .. controls (385.97,160.84) and (382.17,164.66) .. (377.49,164.66) .. controls (372.8,164.66) and (369,160.84) .. (369,156.13) -- cycle ;
\draw    (377.49,156.13) -- (407.84,208.81) ;

\draw (483,221) node [anchor=north west][inner sep=0.75pt]  [color={rgb, 255:red, 74; green, 144; blue, 226 }  ,opacity=1 ]  {$G_{2}$};
\draw (192,220) node [anchor=north west][inner sep=0.75pt]  [color={rgb, 255:red, 74; green, 144; blue, 226 }  ,opacity=1 ]  {$G_{1}$};
\end{tikzpicture}
\caption{Two tiered graphs $G_1$ and $G_2$.}\label{fig7}
\label{tg1}
\end{figure}
\begin{figure}
    \centering
   \tikzset{every picture/.style={line width=0.75pt}} 
\begin{tikzpicture}[x=0.35pt,y=0.35pt,yscale=-1,xscale=1]

\draw [color={rgb, 255:red, 144; green, 19; blue, 254 }  ,draw opacity=1 ][fill={rgb, 255:red, 144; green, 19; blue, 254 }  ,fill opacity=1 ][line width=0.75]  [dash pattern={on 0.84pt off 2.51pt}]  (101.79,44.66) -- (249.43,44.06) -- (573,43.6) ;
\draw [color={rgb, 255:red, 144; green, 19; blue, 254 }  ,draw opacity=1 ][fill={rgb, 255:red, 144; green, 19; blue, 254 }  ,fill opacity=1 ][line width=0.75]  [dash pattern={on 0.84pt off 2.51pt}]  (113.79,156.97) -- (261.43,156.36) -- (570,156.6) ;
\draw [color={rgb, 255:red, 144; green, 19; blue, 254 }  ,draw opacity=1 ][fill={rgb, 255:red, 144; green, 19; blue, 254 }  ,fill opacity=1 ][line width=0.75]  [dash pattern={on 0.84pt off 2.51pt}]  (114,208.62) -- (261.64,208.02) -- (583,207.6) ;
\draw  [fill={rgb, 255:red, 0; green, 0; blue, 0 }  ,fill opacity=1 ][line width=0.75]  (489,43.13) .. controls (489,38.42) and (492.8,34.6) .. (497.49,34.6) .. controls (502.17,34.6) and (505.97,38.42) .. (505.97,43.13) .. controls (505.97,47.84) and (502.17,51.66) .. (497.49,51.66) .. controls (492.8,51.66) and (489,47.84) .. (489,43.13) -- cycle ;
\draw  [fill={rgb, 255:red, 0; green, 0; blue, 0 }  ,fill opacity=1 ][line width=0.75]  (158.64,44.36) .. controls (158.64,39.65) and (162.44,35.83) .. (167.13,35.83) .. controls (171.81,35.83) and (175.61,39.65) .. (175.61,44.36) .. controls (175.61,49.07) and (171.81,52.89) .. (167.13,52.89) .. controls (162.44,52.89) and (158.64,49.07) .. (158.64,44.36) -- cycle ;
\draw  [fill={rgb, 255:red, 0; green, 0; blue, 0 }  ,fill opacity=1 ][line width=0.75]  (529.51,154.6) .. controls (529.51,149.89) and (533.31,146.07) .. (538,146.07) .. controls (542.69,146.07) and (546.49,149.89) .. (546.49,154.6) .. controls (546.49,159.31) and (542.69,163.13) .. (538,163.13) .. controls (533.31,163.13) and (529.51,159.31) .. (529.51,154.6) -- cycle ;
\draw  [fill={rgb, 255:red, 0; green, 0; blue, 0 }  ,fill opacity=1 ][line width=0.75]  (276,155.13) .. controls (276,150.42) and (279.8,146.6) .. (284.49,146.6) .. controls (289.17,146.6) and (292.97,150.42) .. (292.97,155.13) .. controls (292.97,159.84) and (289.17,163.66) .. (284.49,163.66) .. controls (279.8,163.66) and (276,159.84) .. (276,155.13) -- cycle ;
\draw  [fill={rgb, 255:red, 0; green, 0; blue, 0 }  ,fill opacity=1 ][line width=0.75]  (137,155.13) .. controls (137,150.42) and (140.8,146.6) .. (145.49,146.6) .. controls (150.17,146.6) and (153.97,150.42) .. (153.97,155.13) .. controls (153.97,159.84) and (150.17,163.66) .. (145.49,163.66) .. controls (140.8,163.66) and (137,159.84) .. (137,155.13) -- cycle ;
\draw  [fill={rgb, 255:red, 0; green, 0; blue, 0 }  ,fill opacity=1 ][line width=0.75]  (471,156.13) .. controls (471,151.42) and (474.8,147.6) .. (479.49,147.6) .. controls (484.17,147.6) and (487.97,151.42) .. (487.97,156.13) .. controls (487.97,160.84) and (484.17,164.66) .. (479.49,164.66) .. controls (474.8,164.66) and (471,160.84) .. (471,156.13) -- cycle ;
\draw  [fill={rgb, 255:red, 0; green, 0; blue, 0 }  ,fill opacity=1 ][line width=0.75]  (222,100.13) .. controls (222,95.42) and (225.8,91.6) .. (230.49,91.6) .. controls (235.17,91.6) and (238.97,95.42) .. (238.97,100.13) .. controls (238.97,104.84) and (235.17,108.66) .. (230.49,108.66) .. controls (225.8,108.66) and (222,104.84) .. (222,100.13) -- cycle ;
\draw  [fill={rgb, 255:red, 0; green, 0; blue, 0 }  ,fill opacity=1 ][line width=0.75]  (306.35,207.81) .. controls (306.35,203.1) and (310.15,199.28) .. (314.84,199.28) .. controls (319.52,199.28) and (323.32,203.1) .. (323.32,207.81) .. controls (323.32,212.52) and (319.52,216.34) .. (314.84,216.34) .. controls (310.15,216.34) and (306.35,212.52) .. (306.35,207.81) -- cycle ;
\draw  [fill={rgb, 255:red, 0; green, 0; blue, 0 }  ,fill opacity=1 ][line width=0.75]  (509,100.13) .. controls (509,95.42) and (512.8,91.6) .. (517.49,91.6) .. controls (522.17,91.6) and (525.97,95.42) .. (525.97,100.13) .. controls (525.97,104.84) and (522.17,108.66) .. (517.49,108.66) .. controls (512.8,108.66) and (509,104.84) .. (509,100.13) -- cycle ;
\draw    (167.13,44.36) -- (145.49,155.13) ;
\draw    (228.49,100.13) -- (148,151.6) ;
\draw    (230.49,100.13) -- (284.49,155.13) ;
\draw    (497.49,43.13) -- (481.26,153.86) ;
\draw    (497.49,43.13) -- (538,154.6) ;
\draw [color={rgb, 255:red, 144; green, 19; blue, 254 }  ,draw opacity=1 ][fill={rgb, 255:red, 144; green, 19; blue, 254 }  ,fill opacity=1 ][line width=0.75]  [dash pattern={on 0.84pt off 2.51pt}]  (111.79,101.97) -- (259.43,101.36) -- (568,101.6) ;
\draw    (497.49,43.13) -- (314.84,207.81) ;
\draw  [fill={rgb, 255:red, 0; green, 0; blue, 0 }  ,fill opacity=1 ][line width=0.75]  (276,155.13) .. controls (276,150.42) and (279.8,146.6) .. (284.49,146.6) .. controls (289.17,146.6) and (292.97,150.42) .. (292.97,155.13) .. controls (292.97,159.84) and (289.17,163.66) .. (284.49,163.66) .. controls (279.8,163.66) and (276,159.84) .. (276,155.13) -- cycle ;
\draw    (284.49,155.13) -- (314.84,207.81) ;
\draw [line width=1.5]  [dash pattern={on 1.69pt off 2.76pt}]  (284.49,155.13) -- (497.49,43.13) ;
\draw [line width=1.5]  [dash pattern={on 1.69pt off 2.76pt}]  (314.84,207.81) -- (145.49,155.13) ;
\draw [line width=1.5]  [dash pattern={on 1.69pt off 2.76pt}]  (477,156.6) -- (319.49,211.13) ;

\draw (307,237) node [anchor=north west][inner sep=0.75pt]  [font=\large,color={rgb, 255:red, 74; green, 144; blue, 226 }  ,opacity=1 ]  {$G$};
\draw (192,220) node [anchor=north west][inner sep=0.75pt]  [color={rgb, 255:red, 74; green, 144; blue, 226 }  ,opacity=1 ]  {};
\end{tikzpicture}
\caption{Tiered graph $G$ after the identification of $G_1$ and $G_2$ of Figure~\ref{tg1}.}\label{fig8}
\end{figure}
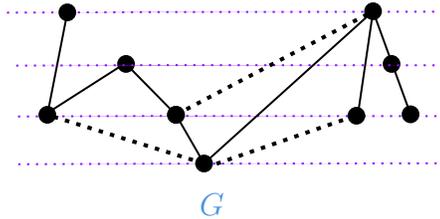
    Now we prove the twisting operation for tiered graphs. Consider two graphs $G_{1}$ and $G_{2}$. We identify two vertices from different tiers. Consider the vertices  $i^{\prime}$ and $j^{\prime}$ of $G_{1}$ and $i^{\dprime}$ and $j^{\dprime}$ of $G_{2}$. We  twist one of the graphs such that $i^{\prime}$ will be identified with $j^{\dprime}$ and $j^{\prime}$ with $i^{\dprime}$. The resulting graph $G$ is a tiered graph such that for $k \in V(G)$, either $k^{\prime} \prec (j^{\prime} \bullet i^{\dprime})$ and $\textbf{t}(k) \prec \textbf{t}((j^{\prime} \bullet i^{\dprime})$ or $(j^{\prime} \bullet i^{\dprime}) \prec k$ and $ \textbf{t}((j^{\prime} \bullet i^{\dprime}) \prec \textbf{t}(k)$. Thus the resulting graph is a tiered graph  using Whitney's twisting operation. 
    
\end{proof}
 
\begin{proposition}
    If two connected tired graphs $G$ and $H$ can be formed via Whitney's 2-isomorphism theorem then we have $\mathcal{C}^{F^{1}_{m}}_{G}$ $\cong$ $\mathcal{C}^{F^{2}_{m}}_{H}$.
\end{proposition}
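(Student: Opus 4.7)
The plan is to reduce the proposition to the three elementary Whitney moves (identification, cleaving, twisting) and exhibit, for each of them, an explicit isomorphism of spanning tree counting algebras. By Lemma \ref{TT}, each move applied to a tiered graph returns a tiered graph with a well-defined tiering function, so the algebras $\mathcal{C}_{G}$ and $\mathcal{C}_{H}$ are both defined in the same way. An induction on the number of moves separating $G$ and $H$ then collapses the proof to a single-move verification.

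First I would fix the ambient square-free algebra $\Phi_{G}$ and observe that its defining relations depend only on the cycle matroid $M(G)$: the condition that $\prod_{e \in H}\phi_{e}=0$ on non-slim $H$ is equivalent to asking that the edge complement $E(G)\setminus H$ disconnect $G$, i.e.\ that $H$ contain a cocycle. Since Whitney's theorem guarantees $M(G)\cong M(H)$, there is a canonical edge bijection $\psi:E(G)\to E(H)$ inducing a graded ring isomorphism $\Psi:\Phi_{G}\to\Phi_{H}$. It therefore suffices to show $\Psi$ carries the generators $X_{i}$ of $\mathcal{C}_{G}$ to (scalar multiples of) generators $X_{j}$ of $\mathcal{C}_{H}$, because the images then span $\mathcal{C}_{H}$ and the dimensions agree by the spanning-tree count.

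For identification at a vertex, $E(G)=E(G_{1})\sqcup E(G_{2})$, the slim subgraphs split as pairs of slim subgraphs of the pieces, and consequently $\Phi_{G}\cong\Phi_{G_{1}}\otimes\Phi_{G_{2}}$. The generator at the identified vertex $v=v_{1}\bullet v_{2}$ satisfies $X_{v}=X_{v_{1}}\otimes 1 + 1\otimes X_{v_{2}}$ with the other generators being $X_{i}\otimes 1$ or $1\otimes X_{j}$; the coefficients $c_{i,e}$ are preserved because the tier of $v$ is $\textbf{t}_{1}(v_{1})\bullet\textbf{t}_{2}(v_{2})$ and the comparisons $i<j$ with $\textbf{t}(i)<\textbf{t}(j)$ agree on each piece. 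Cleaving is simply the inverse construction, so the same identification works in reverse.

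The main obstacle is the twisting operation. Here I would proceed as follows: in the twist $G_{\circ}$ of $G$ about $\{u,v\}$, the vertex labels $u_{1},v_{1}$ in $G_{1}$ get swapped in their gluing to $G_{2}$, so an edge $e$ of $G_{1}$ incident to $u_{1}$ is renamed to be incident to the image of $v_{1}$ in $G_{\circ}$. The delicate point is that the sign $c_{i,e}\in\{+1,-1,0\}$ in the definition of $X_{i}$ depends on the order of the endpoints of $e$. By Lemma \ref{TT}, the twist produces a tiered graph; in particular, the relabeling $u_{1}\leftrightarrow v_{2}$, $u_{2}\leftrightarrow v_{1}$ respects the partial order induced by the concatenated tiers, so the signs $c_{i,e}$ and $c_{\psi(i),\psi(e)}$ agree. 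Hence $\Psi(X_{i})=X_{\psi(i)}$ and $\Psi$ restricts to an isomorphism $\mathcal{C}_{G}\xrightarrow{\sim}\mathcal{C}_{G_{\circ}}$. Finally, the spanning forest variants $\mathcal{C}^{F_{m}^{1}}_{G}$ and $\mathcal{C}^{F_{m}^{2}}_{H}$ are isomorphic because the bijection on spanning trees extends component-wise to spanning forests, using the dual/forest bijection established earlier in this section.
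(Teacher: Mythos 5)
Your strategy---reduce to single Whitney moves via Lemma~\ref{TT}, observe that $\Phi_{G}$ depends only on the cycle matroid (a subgraph is non-slim exactly when its edge set contains a bond, i.e.\ a cocircuit), and then track the generators $X_{i}$ through each move---is more self-contained than the paper's own proof, which delegates the $2$-isomorphism invariance of $\mathcal{C}_{G}$ to Lemma 3 of \cite{N17} and only adds the remark that under identification the new generator is $\pm X_{j'} \pm X_{j''}$. Your treatment of identification and cleaving via $\Phi_{G}\cong \Phi_{G_{1}}\otimes \Phi_{G_{2}}$ and $X_{v}=X_{v_{1}}\otimes 1+1\otimes X_{v_{2}}$ is correct and consistent with that remark; any sign discrepancies there are harmless, since replacing a generator by its negative does not change the subalgebra it generates.

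The twisting step, however, has a genuine gap. You reduce it to checking that the signs $c_{i,e}$ are preserved and conclude $\Psi(X_{i})=X_{\psi(i)}$, but the obstruction is not the signs: it is the \emph{supports} of the linear forms. In $G$ the generator at $u=u_{1}\bullet u_{2}$ is $X_{u}=X^{G_{1}}_{u_{1}}+X^{G_{2}}_{u_{2}}$ (a sum over the $G_{1}$-edges at $u_{1}$ and the $G_{2}$-edges at $u_{2}$), whereas in the twist $G_{\circ}$ the vertex $u_{1}\bullet v_{2}$ carries $X^{G_{1}}_{u_{1}}+X^{G_{2}}_{v_{2}}$: the $G_{2}$-edges formerly attached to $u$ are now attached to the other gluing vertex. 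So the edge bijection sends $X_{u}$ to an element that is not a scalar multiple of any generator of $\mathcal{C}_{G_{\circ}}$, and the degree-one spans of the two generator sets genuinely differ inside $\Phi_{G_{1}}\otimes\Phi_{G_{2}}$; no argument about the tier order can repair this. The missing ingredients are the relations $\sum_{i\in V(G_{k})}X^{G_{k}}_{i}=0$ on each piece together with the ring automorphism $\sigma$ of $\Phi_{G}$ fixing $\phi_{e}$ for $e\in E(G_{1})$ and negating $\phi_{e}$ for $e\in E(G_{2})$. Indeed, after discarding the redundant generator $X_{v}$, one computes
\[
\sigma\bigl(X^{G_{1}}_{u_{1}}+X^{G_{2}}_{u_{2}}\bigr)=X^{G_{1}}_{u_{1}}-X^{G_{2}}_{u_{2}}
=\Bigl(X^{G_{1}}_{u_{1}}+X^{G_{2}}_{v_{2}}\Bigr)+\sum_{j\neq u_{2},v_{2}}X^{G_{2}}_{j},
\]
so that $\sigma$ carries a generating set of $\mathcal{C}_{G}$ onto one of $\mathcal{C}_{G_{\circ}}$ modulo the common generators. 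This is precisely the content of the cited Lemma 3 of \cite{N17}; without it your claim that $\Psi$ carries generators to generators fails for twisting. (Separately, the final sentence on the forest-indexed algebras $\mathcal{C}^{F^{1}_{m}}_{G}$ and $\mathcal{C}^{F^{2}_{m}}_{H}$ is asserted rather than argued, but that vagueness is shared with the statement itself.)
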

\begin{proof}
    Combining the Lemma~\ref{TT} and Lemma 3 of \cite{N17}. We just add that when identify two vertices $i^{\prime}$ and $i^{\dprime}$, the variable $X_{i}$ can be written as $X_{i}= \pm X_{j^{\prime}} \pm X_{j^{\dprime}}$. Thus $X_{i}$ belongs to a linear space spanned by $< X_{1^{\prime}}, \dots, X_{(i^{\prime}-1)}, \dots, X_{m^{\prime}}, X_{1^{\dprime}}, \dots, X_{(j^{\dprime}+1)}, \dots, X_{m^{\dprime}}>   $. We have the desired result. 
\end{proof}
\bibliography{myref}

\end{document}